\documentclass[12pt]{article}
\usepackage{amsmath}
\usepackage{amssymb}
\usepackage{amsthm}
\usepackage{mathrsfs}
\usepackage{comment}
\usepackage[capitalise]{cleveref}
\crefformat{equation}{(#2#1#3)}
\usepackage{enumerate}
\usepackage{graphicx}
\usepackage[margin=1.5in]{geometry}

\newtheorem{thm}{Theorem}
\numberwithin{thm}{section}
\newtheorem*{thm*}{Theorem}
\newtheorem{qst}[thm]{Question}
\newtheorem*{qst*}{Question}
\newtheorem*{oq*}{Open Question 3.10}
\newtheorem{prp}[thm]{Proposition}
\newtheorem{lma}[thm]{Lemma}
\newtheorem{cor}[thm]{Corollary}

\newtheorem{clm}{Claim}
\newtheorem{sclm}{Subclaim}
\newcommand{\mo}{<_M}

\newcommand{\wo}{\leq_S}

\theoremstyle{definition}
\newtheorem{defn}[thm]{Definition}

\newtheorem{rmk}[thm]{Remark}

\makeatletter\@addtoreset{case}{thm}\makeatother
\makeatletter\@addtoreset{clm}{thm}\makeatother
\makeatletter\@addtoreset{sclm}{thm}\makeatother
\begin{document}
\title{The Linearity of the Mitchell Order}
\author{Gabriel Goldberg}
\maketitle
\section{Introduction}
The subject of this paper is an axiom inspired by the theory of canonical inner models for large cardinal hypotheses and its implications for the structure of the Mitchell order. This axiom, the {\it Ultrapower Axiom}, holds in all known canonical inner models, and despite its simplicity, seems to distill many of the structural features of these models with respect to the class of countably complete ultrafilters, the basic building blocks of modern large cardinals. 

The Ultrapower Axiom follows from a very weak form of the comparison lemma (Woodin's {\it Weak Comparison}), and should itself be viewed as a very weak comparison principle. The argument that these weak comparison principles hold in canonical inner models is so general that they seem likely to hold in any inner model constructed by anything close to the current methodology. The investigation of the Ultrapower Axiom therefore serves both to illuminate the structure of the known inner models and to predict the structure of the inner models yet to be discovered. Moreover, a refutation of the Ultrapower Axiom from any large cardinal hypothesis whatsoever would be a strong anti-inner model theorem.

In this paper, we will focus on the implications of the Ultrapower Axiom for the Mitchell order on normal ultrafilters, supercompactness measures, and huge measures. The Ultrapower Axiom offers a new perspective on the structure of the Mitchell order in canonical inner models, so we begin by describing the original perspective. The Mitchell order was first isolated by Mitchell in the context of the models \(L[\mathcal U]\) constructed from coherent sequences of ultrafilters \(\mathcal U\). In these models, the Mitchell order is manifestly a linear order. We outline Mitchell's proof. One first shows by a comparison argument that every normal ultrafilter \(U\) in \(L[\mathcal U]\) is indexed on the sequence \(\mathcal U\). The fact that \(\mathcal U\) is coherent then implies that if \(U_0\) and \(U_1\) are normal ultrafilters on \(\mathcal U\) and the index of \(U_0\) precedes that of \(U_1\), then \(U_0\) precedes \(U_1\) in the Mitchell order.

The linearity of the Mitchell order on normal measures is the simplest feature of canonical inner models that has not been replicated by forcing. Therefore a key test question for the theory of inner models for large cardinal axioms is whether the linearity of the Mitchell order is compatible with very large cardinals. For example, the following question was raised very early on by Solovay-Reinhardt-Kanamori \cite{Kanamori} in a slightly weaker form:

\begin{qst}\label{oldq}
Assume there is a cardinal \(\kappa\) that is \(2^\kappa\)-supercompact. Can the Mitchell order linearly order the normal ultrafilters on \(\kappa\)?
\end{qst}

Woodin \cite{Woodin} and Neeman-Steel \cite{NeemanSteel} have constructed canonical inner models at the finite levels of supercompactness under iteration hypotheses, and so one would expect to dispense easily with this question. Yet Mitchell's argument cannot be generalized to these models: in order to develop a comparison theory for these models, one must explicitly {\it prevent} certain normal measures from appearing on their extender sequences. It turns out, however, that the existence of a comparison theory, rather than any specific requirements about the form of a coherent extender sequence, ensures the linearity of the Mitchell order by a completely different argument. 

\begin{thm}[Ultrapower Axiom]\label{normal}
The Mitchell order on normal ultrafilters is linear.
\end{thm}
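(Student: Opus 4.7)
My plan is to apply the Ultrapower Axiom to $(U_0, U_1)$, yielding internal ultrapower embeddings $i \colon M_0 \to N$ and $j \colon M_1 \to N$ with $i \circ j_0 = j \circ j_1$, where $j_\ell \colon V \to M_\ell$ is the ultrapower by $U_\ell$. I will then analyze the action of $i$ and $j$ on $\kappa$ to compare $U_0$ and $U_1$ in the Mitchell order.

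The central calculation assumes $i(\kappa) = \kappa$. By elementarity of $i$, for every $X \subseteq \kappa$,
\[
X \in U_0 \iff \kappa \in j_0(X) \iff i(\kappa) \in i(j_0(X)) \iff \kappa \in j(j_1(X)).
\]
Writing $j = j^{M_1}_{W_1}$ for some $W_1 \in M_1$ and choosing $f \in M_1$ with $[f]_{W_1} = \kappa$, one has $\kappa \in j(j_1(X))$ iff $\{a : f(a) \in j_1(X)\} \in W_1$. If $j(\kappa) > \kappa$, then $[f]_{W_1} < [c_\kappa]_{W_1}$ forces $\{a : f(a) < \kappa\} \in W_1$, and since $j_1(X) \cap \kappa = X$, this collapses to $\{a : f(a) \in X\} \in W_1$, i.e.\ $\kappa \in j(X)$. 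Hence
\[
U_0 = \{X \in P(\kappa)^{M_1} : \kappa \in j(X)\} \in M_1,
\]
giving $U_0 \mo U_1$. If instead $j(\kappa) = \kappa$, then $f(a) = \kappa$ on a $W_1$-measure-one set, so $\kappa \in j(j_1(X)) \iff \kappa \in j_1(X) \iff X \in U_1$, giving $U_0 = U_1$. Symmetrically, the assumption $j(\kappa) = \kappa$ alone yields $U_1 \mo U_0$ or $U_0 = U_1$.

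The principal obstacle is to verify that this case analysis is exhaustive: I must rule out the scenario in which both $i(\kappa) > \kappa$ and $j(\kappa) > \kappa$. I expect this to follow from a generator analysis of $k = i \circ j_0 = j \circ j_1$. Normality of $U_0$ and $U_1$ ensures that $\kappa$ is the unique generator of each of $j_0$ and $j_1$, so the commuting square must represent $\kappa \in N$ coherently from both sides; if both $i$ and $j$ moved $\kappa$, then $\kappa$ would appear as a ``new'' generator of $k$ unaccounted for by either factorization, which should either contradict the existence of the UA solution or allow one to pass to a minimal solution in which $\kappa$ is fixed by one of $i, j$, closing the argument.
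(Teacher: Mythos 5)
There is a genuine gap, and it is exactly the one you flag as the ``principal obstacle'': your case analysis is organized around whether $i$ or $j$ \emph{fixes} $\kappa$, and the leftover case in which $i(\kappa) > \kappa$ and $j(\kappa) > \kappa$ cannot be ruled out. Indeed it genuinely occurs: if $W$ is a normal ultrafilter on $\kappa$ with $W \mo U_1$, then taking $U_0 = U_1$ and $W_0 = W_1 = W$ gives a perfectly valid comparison in which both internal embeddings have critical point $\kappa$, so both move $\kappa$. Hence no ``generator analysis'' will show that one of $i, j$ fixes $\kappa$, and the Ultrapower Axiom by itself gives you no license to pass to a ``minimal'' comparison (making that precise is essentially the theory of the seed order, which is not available at this stage). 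As written, the plan cannot be completed.

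The repair is small but essential, and it is what the paper does: do not compare $i(\kappa)$ and $j(\kappa)$ each against $\kappa$, but against \emph{each other}. Since these are ordinals, the trichotomy $i(\kappa) < j(\kappa)$, $i(\kappa) = j(\kappa)$, $i(\kappa) > j(\kappa)$ is automatically exhaustive, with no claim that either embedding fixes anything. If $i(\kappa) = j(\kappa)$, the computation $X \in U_0 \iff i(\kappa) \in i(j_0(X)) = j(j_1(X)) \iff j(\kappa) \in j(j_1(X)) \iff X \in U_1$ gives $U_0 = U_1$. If $i(\kappa) < j(\kappa)$, then for $X \subseteq \kappa$,
\[
X \in U_0 \iff i(\kappa) \in j(j_1(X)) \iff i(\kappa) \in j(j_1(X)) \cap j(\kappa) = j(j_1(X)\cap\kappa) = j(X),
\]
so $U_0 = \{X \subseteq \kappa : i(\kappa) \in j(X)\} \in M_1$ because $j$ is definable over $M_1$ from $W_1 \in M_1$. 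Note that this is your own central calculation with the anchor point $\kappa$ replaced by $i(\kappa)$; your version is the special case $i(\kappa) = \kappa$. Your computations in the cases you do treat are correct, so the only defect is the non-exhaustive case split.
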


Since the Ultrapower Axiom holds in the Woodin and Neeman-Steel models, this answers \cref{oldq} positively under a very plausible iteration hypothesis. Furthermore, the result extends to all generalized normal ultrafilters; we state some of the generalizations below. 

The paper is organized as follows. In the first section we state the Ultrapower Axiom and quickly to prove \cref{normal}. In the next section, we generalize it to a much wider class of ultrafilters, the Dodd solid ultrafilters. Finally, in the last section we prove a generalization of Solovay's lemma that implies the Dodd solidity of generalized normal ultrafilters under a cardinal arithmetic assumption that is necessary. This pushes the linearity of the Mitchell order into the realm of very large cardinals. In particular, we have the following two theorems, though in fact what we will prove is much stronger:

\begin{thm}[Ultrapower Axiom]
Suppose that \(\lambda\) is a cardinal such that \(\lambda = 2^{<\lambda}\) and \(\kappa \leq \lambda\) is \(\lambda\)-supercompact. Then the Mitchell order wellorders the normal fine \(\kappa\)-complete ultrafilters on \(P_\kappa(\lambda)\).
\end{thm}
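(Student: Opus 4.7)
The plan is to deduce this theorem as an immediate combination of three ingredients developed earlier in the paper: the generalization of \cref{normal} to Dodd solid ultrafilters, the generalized Solovay lemma, and the classical wellfoundedness of the Mitchell order on countably complete ultrafilters. Accordingly, the proof should be very short once the machinery is in place; the real work has already been done.

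First I would invoke the generalized Solovay lemma alluded to in the introduction. Under the cardinal arithmetic hypothesis \(\lambda = 2^{<\lambda}\), this lemma says that every normal fine \(\kappa\)-complete ultrafilter \(U\) on \(P_\kappa(\lambda)\) is Dodd solid. Informally, the classical Solovay lemma shows that the map \(\sigma \mapsto \sup(\sigma)\) is injective on a set in \(U\), so that \(U\) is Rudin--Keisler isomorphic to a normal-type ultrafilter on an ordinal; the strengthened version extracts the finer structure of the seed that witnesses Dodd solidity. With that in hand, I would apply the paper's generalization of \cref{normal}, namely that under the Ultrapower Axiom the Mitchell order is linear on Dodd solid ultrafilters, to the subclass consisting of the normal fine \(\kappa\)-complete ultrafilters on \(P_\kappa(\lambda)\); the \(\lambda\)-supercompactness of \(\kappa\) is used only to guarantee that this class is nonempty. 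Finally, since the Mitchell order is always wellfounded on countably complete ultrafilters (by Mitchell's classical argument using iterability of the ultrapowers), linearity upgrades at once to a wellorder.

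The hard part — and this is precisely the content of the last section of the paper, rather than of this particular theorem — is the Solovay lemma generalization, i.e.\ the step that upgrades ``normal fine \(\kappa\)-complete on \(P_\kappa(\lambda)\)'' to ``Dodd solid.'' There the hypothesis \(\lambda = 2^{<\lambda}\) is genuinely used and not merely convenient: one needs it to count the relevant \({<}\lambda\)-sized substructures and argue that the seed generating \(U\) concentrates on a set of the right cardinality, so that the solidity condition can be verified. Once that generalization is established, the theorem stated here is essentially a corollary obtained by stacking it on top of the Dodd solid version of \cref{normal} and the standard wellfoundedness of \(\mo\).
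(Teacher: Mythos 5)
Your skeleton --- pass through Solovay's lemma, use \(\lambda=2^{<\lambda}\) to get Dodd solidity, apply \cref{dodd}, and upgrade linearity to a wellorder via the standard wellfoundedness of \(\mo\) --- is the paper's route. But there is a genuine gap in the assembly. A normal fine \(\kappa\)-complete ultrafilter \(\mathcal U\) on \(P_\kappa(\lambda)\) is not itself Dodd solid in the sense of the paper: Dodd solidity is defined only for uniform ultrafilters on \([a]\) with \(a\in[\text{Ord}]^{<\omega}\), and what Solovay's lemma and its generalization \cref{optimal} actually give (via \cref{normalsolid}) is that \(\mathcal U\) is Rudin--Keisler \emph{equivalent} to a Dodd solid uniform ultrafilter \(U_{\mathcal U}\) on an ordinal. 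So you cannot apply \cref{dodd} ``to the subclass consisting of the normal fine \(\kappa\)-complete ultrafilters on \(P_\kappa(\lambda)\)''; it applies to the derived ultrafilters \(U_{\mathcal U_0}\), \(U_{\mathcal U_1}\) and yields, say, \(U_{\mathcal U_0}\in\text{Ult}(V,\mathcal U_1)\).

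The missing step is the transfer from \(U_{\mathcal U_0}\mo U_{\mathcal U_1}\) back to \(\mathcal U_0\mo\mathcal U_1\), and this is exactly where the paper does real work: one must check that \(P(P_\kappa(\lambda))\) and the Rudin--Keisler reduction \(f\) both lie in \(\text{Ult}(V,\mathcal U_1)\), so that \(\mathcal U_0=\{X: f^{-1}(X)\in U_{\mathcal U_0}\}\) can be computed there. In the regular case this uses \(\lambda^{<\lambda}=\lambda\) and closure of the ultrapower under \(\lambda\)-sequences; in general it requires \(|P_\kappa(\lambda)|=\lambda^{<\kappa}\) together with \cref{spacelemma} and \cref{spsc}, and the paper explicitly flags this ``seemingly trivial issue'' as the obstruction that forces the invariant Mitchell order of \cref{invariantdef} for singular \(\lambda\) (see \cref{counterintuitive} and the surrounding discussion). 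A secondary inaccuracy: \(\lambda=2^{<\lambda}\) is used in \cref{normalsolid} to put \(j_{\mathcal U}\restriction\bigcup_{\alpha<\lambda}P(\alpha)\) into the ultrapower, and then again in the transfer step just described --- not to ``count \({<}\lambda\)-sized substructures.'' The theorem is indeed a corollary of the machinery, but the corollary has content your write-up omits.
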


In a separate paper we show that if \(\kappa\) is supercompact and the Ultrapower Axiom holds, then for all \(\delta\geq \kappa\), \(2^\delta=\delta^+\). This result and its local refinements to a certain extent justify the cardinal arithmetic assumptions in the theorems of this paper.

\begin{thm}[Ultrapower Axiom]
Suppose that \(\lambda\) is a regular cardinal such that \(\lambda = 2^{<\lambda}\). Then the Mitchell order wellorders the normal fine ultrafilters on \(P_\lambda(\lambda)\).
\end{thm}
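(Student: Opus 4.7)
The plan is to reduce this to the same two inputs that proved the preceding theorem: the second section's linearity result for Dodd solid ultrafilters under the Ultrapower Axiom, and the third section's generalization of Solovay's lemma. Given any normal fine ultrafilter $U$ on $P_\lambda(\lambda)$, the first step is to apply the generalized Solovay lemma to conclude that $U$ is Dodd solid. The second step is to apply the Dodd solid linearity theorem to conclude that the Mitchell order linearly orders the normal fine ultrafilters on $P_\lambda(\lambda)$. Linearity upgrades to a wellorder via the standard wellfoundedness of the Mitchell order on countably complete ultrafilters, which needs no further input from this paper.

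The main obstacle is verifying the hypotheses of the generalized Solovay lemma when the index set and the underlying cardinal coincide. In the previous theorem, the existence of a $\lambda$-supercompact $\kappa \leq \lambda$ is crucial: it controls the structure of measure-one sets and lets Dodd solidity witnesses be extracted from the supercompactness embedding. Here no such $\kappa$ is available, and the role played by supercompactness must instead be played by the regularity of $\lambda$ alone. The cardinal arithmetic hypothesis $\lambda = 2^{<\lambda}$ still yields $|P_\lambda(\lambda)| = \lambda$, which is the key combinatorial input: each potential solidity witness is coded on a $\lambda$-indexed family, and regularity prevents any bounded-cofinal escape. The Solovay-style construction should then go through with only cosmetic changes, producing a sequence of seed elements inside $M_U$ that witness Dodd solidity.

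Once Dodd solidity is in hand, the section 2 linearity result applies directly: given two such ultrafilters $U_0, U_1$, the Ultrapower Axiom produces internal ultrapowers of $M_{U_0}$ and $M_{U_1}$ with a common target, and Dodd solidity on both sides forces the comparison to align the generators, yielding either $U_0 \mo U_1$ or $U_1 \mo U_0$. Combined with wellfoundedness of $\mo$ on countably complete ultrafilters, this delivers the claimed wellorder. I expect the bulk of the actual work to lie in checking that the proof of the generalized Solovay lemma, as written for $\kappa < \lambda$, really does specialize cleanly to $\kappa = \lambda$ — in particular, that the regularity of $\lambda$ genuinely substitutes for the $\lambda$-completeness used in the supercompact case at each place the original argument invokes it.
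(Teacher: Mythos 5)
Your high-level decomposition (Solovay's lemma gives a Dodd solid projection; the Dodd solid linearity theorem gives trichotomy; wellfoundedness of \(\mo\) upgrades linearity to a wellorder) matches the paper's, but there is a genuine gap at the point where you ``apply the Dodd solid linearity theorem to conclude that the Mitchell order linearly orders the normal fine ultrafilters on \(P_\lambda(\lambda)\).'' The linearity theorem for Dodd solid ultrafilters applies to the derived ultrafilters \(U_0 = U_{\mathcal U_0}\) and \(U_1 = U_{\mathcal U_1}\) on \(\lambda\), not to \(\mathcal U_0\) and \(\mathcal U_1\) themselves; a normal fine ultrafilter on \(P_\lambda(\lambda)\) is only Rudin--Keisler \emph{equivalent} to a Dodd solid ultrafilter, it is not one. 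The Mitchell order is not Rudin--Keisler invariant, so \(U_0 \in \text{Ult}(V,\mathcal U_1)\) does not automatically yield \(\mathcal U_0 \in \text{Ult}(V,\mathcal U_1)\). The paper closes this gap with a separate transfer argument: since \(\lambda\) is regular and \(2^{<\lambda} = \lambda\), one has \(\lambda^{<\lambda} = \lambda\), so \(\lambda\)-closure of \(\text{Ult}(V,\mathcal U_1)\) gives \(P(P_\lambda(\lambda)) \subseteq \text{Ult}(V,\mathcal U_1)\) together with the reduction function \(f : \lambda \to P_\lambda(\lambda)\), and then \(\mathcal U_0 = \{X : f^{-1}(X) \in U_0\}\) is computable inside \(\text{Ult}(V,\mathcal U_1)\). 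This is not a cosmetic step: for singular \(\lambda\) exactly this transfer fails, which is why the paper must retreat to the ``invariant Mitchell order'' in that setting. Your proposal uses the hypotheses only once (for Dodd solidity) and never for the transfer, so as written it proves linearity of \(\mo\) on the projections \(U_{\mathcal U}\), not on the normal fine ultrafilters.

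Separately, the obstacle you flag as the main one --- that the generalized Solovay lemma might not specialize to the case where the index set and the cardinal coincide, with regularity substituting for supercompactness --- is not where the difficulty lies. For regular \(\lambda\) the original Solovay lemma applies directly (the generalized version is only needed for singular \(\lambda\)), and the existence of a normal fine ultrafilter on \(P_\lambda(\lambda)\) already supplies a \(\lambda\)-supercompact \(\kappa \leq \lambda\), namely its critical point, so nothing needs to be substituted for supercompactness. The paper's corollary that \(\mathcal U\) is RK-equivalent to a Dodd solid ultrafilter on \(\lambda\) under exactly the hypotheses at hand can be cited as is.
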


The assumption of regularity can essentially be dropped, yielding the linearity of the appropriate variant of the Mitchell order on all normal fine ultrafilters assuming GCH, see \cref{invariant}. The statement of this theorem involves the definition of that variant, see \cref{invariantdef}. We note that its proof requires the generalization of Solovay's lemma to singular cardinals, see \cref{optimal}, and this involves some interesting combinatorics.  

\section{The Ultrapower Axiom}
\begin{defn}
Suppose \(M\) is an inner model. An {\it internal ultrafilter} of \(M\) is a set \(W\in M\) such that in \(M\), \(W\) is a countably complete ultrafilter. 
\end{defn}

\begin{defn}
Suppose \(U_0\) and \(U_1\) are countably complete ultrafilters. A {\it comparison} of \(\langle U_0,U_1\rangle\) by internal ultrafilters is a pair \(\langle W_0,W_1\rangle\) such that the following hold. 
\begin{enumerate}[(1)]
\item \(W_0\) is an internal ultrafilter of \(\text{Ult}(V,U_0)\).
\item \(W_1\) is an internal ultrafilter of \(\text{Ult}(V,U_1)\).
\item \(\text{Ult}(\text{Ult}(V,U_0),W_0) = \text{Ult}(\text{Ult}(V,U_1),W_1)\).
\item \(j_{W_0}\circ j_{U_0} = j_{W_1}\circ j_{U_1}\).
\end{enumerate}
Letting \(N = \text{Ult}(\text{Ult}(V,U_0),W_0)\), we will call \(\langle W_0,W_1\rangle\) a comparison of \(\langle U_0,U_1\rangle\) to a {\it common model} \(N\).
\end{defn}

\begin{defn}[Ultrapower Axiom]
Every pair of countably complete ultrafilters admits a comparison by internal ultrafilters.
\end{defn}

Again, the Ultrapower Axiom holds in all known canonical inner models for large cardinal hypotheses. For example, it holds in the largest canonical inner models that have been unconditionally constructed from a large cardinal hypothesis, in the realm of Woodin limits of Woodin cardinals. The more countably complete ultrafilters there are, the more interesting the Ultrapower Axiom becomes, which explains our focus on the Ultrapower Axiom in the context of supercompact cardinals. 

The constructions of canonical inner models conditioned on iterability hypotheses reach the finite levels of supercompactness. For example, the following result shows that the Ultrapower Axiom is almost certainly compatible with a cardinal \(\kappa\) that is \(2^\kappa\)-supercompact.

\begin{thm}[Woodin]\label{finlev}
Assume the \(\mathcal E\)-Iteration Hypothesis. Suppose \(n\) is an natural number and \(\kappa\) is a cardinal that is \(\beth_n(\kappa)\)-supercompact. Then there is an inner model \(M\) in which the following hold.
\begin{enumerate}[(1)]
\item \textnormal{GCH}.
\item There is a cardinal \(\kappa\) that is \(\kappa^{+n}\)-supercompact.
\item The Ultrapower Axiom.
\end{enumerate}
\end{thm}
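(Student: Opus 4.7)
The plan is to extract the inner model $M$ from Woodin's construction of canonical inner models at the finite levels of supercompactness \cite{Woodin}, and then verify each of the three conclusions by invoking the comparison theory that the $\mathcal E$-Iteration Hypothesis underwrites. First, I would unpack what the $\mathcal E$-Iteration Hypothesis amounts to in this context: it is the iterability statement that legitimizes the background-certified coherent sequence $\mathcal E$ used in the Woodin construction, guaranteeing that the associated comparison process terminates. From a cardinal $\kappa$ that is $\beth_n(\kappa)$-supercompact in $V$, one runs the $K^c$-style construction to build a fully iterable premouse $M$ whose extender sequence $\mathcal E^M$ carries enough extenders to code an $n$-th level supercompactness embedding at some image $\bar\kappa$ of $\kappa$.

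For clause (1), GCH in $M$ is a standard consequence of the fine structure of Woodin's hierarchy: condensation and acceptability yield that every bounded subset of a cardinal $\delta$ in $M$ appears at an initial segment of the sequence of size $\delta$, so $2^\delta = \delta^+$ in $M$. For clause (2), one pulls back the supercompactness of $\kappa$ in $V$ to $M$ by the usual covering/absoluteness argument for canonical inner models: a $\beth_n(\kappa)$-supercompactness embedding in $V$ restricts and condenses to an embedding witnessed by an extender on the sequence $\mathcal E^M$, whose presence is exactly what guarantees that $\kappa$ (or an image of it) is $\kappa^{+n}$-supercompact in $M$. This is the ``reflection to the core model'' step that is characteristic of Woodin's construction.

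For clause (3), which is the heart of the matter, I would argue that the comparison theorem for iterable premice directly yields UA in $M$. Given two countably complete ultrafilters $U_0, U_1 \in M$, form the ultrapower premice $\text{Ult}(M,U_0)$ and $\text{Ult}(M,U_1)$; both are iterable by copying, so by the comparison lemma they have a common iterate $N$ obtained via iteration trees $\mathcal T_0, \mathcal T_1$ using extenders from their own sequences. The iteration embeddings $i_0 : \text{Ult}(M,U_0) \to N$ and $i_1 : \text{Ult}(M,U_1) \to N$ agree on the image of $M$, and the branches of $\mathcal T_0, \mathcal T_1$ factor through internal ultrapowers because $M$ already believes its own extenders to be countably complete ultrafilters. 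Unwinding the factorization shows that there exist $W_0 \in \text{Ult}(M,U_0)$ and $W_1 \in \text{Ult}(M,U_1)$, each an internal ultrafilter from $M$'s perspective, such that $j_{W_0} \circ j_{U_0} = j_{W_1} \circ j_{U_1}$ and the double ultrapowers agree. This is precisely the comparison of $\langle U_0, U_1\rangle$ required by the Ultrapower Axiom.

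The main obstacle I expect is the third step: the passage from abstract iterability of the premice to the statement that the comparison can be realized by \emph{internal} ultrafilters, rather than by genuine iteration trees. This requires knowing that the extenders used along the main branches of $\mathcal T_0$ and $\mathcal T_1$ can be amalgamated into single ultrafilters in the respective ultrapowers — in other words, that short iteration trees suffice at this level of the hierarchy. At the finite levels of supercompactness treated by Woodin, the trees involved are linear or at worst of bounded complexity, so this is manageable, but in a fully general treatment one would have to be careful about which extenders on $\mathcal E^M$ correspond to countably complete ultrafilters in $M$ and verify that the iteration strategy always picks branches realized by such extenders. Everything else — GCH, the supercompactness transfer, and the routine verification that the comparison data witnesses UA in the sense of \cref{normal}'s hypothesis — is then a consequence of the machinery already developed in \cite{Woodin}.
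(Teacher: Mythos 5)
The paper does not prove this theorem at all: it is quoted as a black-box result of Woodin, cited to \cite{Woodin}, and used only to argue that the Ultrapower Axiom is consistent with a cardinal $\kappa$ that is $2^\kappa$-supercompact. So there is no proof in the paper to compare yours against, and any attempt necessarily reconstructs an argument living entirely inside Woodin's inner model theory. With that caveat, your sketch of clauses (1) and (2) is the standard picture (fine-structural condensation for GCH, reflection of the background supercompactness embedding into the model for clause (2)), but your treatment of clause (3) has a genuine gap at its center.

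The problem is the step where you compare $\text{Ult}(M,U_0)$ and $\text{Ult}(M,U_1)$ as premice and assert that the resulting iteration embeddings ``agree on the image of $M$'' and ``factor through internal ultrapowers.'' Neither claim is automatic, and together they are essentially the entire content of the Ultrapower Axiom. First, the comparison lemma produces a common iterate $N$ with embeddings $i_0, i_1$, but it gives no commutativity $i_0\circ j_{U_0} = i_1\circ j_{U_1}$; arranging that the two composed embeddings of $M$ into $N$ coincide is the delicate point, and in the known derivation it is obtained not by comparing the ultrapowers directly but by passing to countable hulls, applying Weak Comparison there, and using a definability (soundness/hull) argument to see that both sides compute the same embedding. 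Second, the branch embeddings of an iteration tree are compositions of extender ultrapowers, and an extender on $\mathcal E^M$ is not in general a single countably complete ultrafilter of the model it is applied to; collapsing a (possibly non-linear) tree into one internal ultrafilter $W_0$ of $\text{Ult}(M,U_0)$ is not a ``routine unwinding'' but requires a separate argument (e.g., realizing the branch embedding as an ultrapower by a single ultrafilter derived from it, and checking that this ultrafilter is internal). Your own closing paragraph flags this as ``the main obstacle,'' which is the right instinct, but the proposal then waves it away by asserting that the trees are linear or of bounded complexity at this level; that assertion is unsubstantiated and is precisely where the proof would have to do real work. As written, the proposal is a plausible roadmap rather than a proof, and the paper itself offers no guidance since it takes the theorem on faith.
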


The following theorem therefore answers the old question (see \cite{Kanamori}) of whether the Mitchell order can linearly order the normal ultrafilters on \(\kappa\) when \(\kappa\) is \(2^\kappa\)-supercompact.

\begin{thm}[Ultrapower Axiom]\label{normal}
The Mitchell order on normal ultrafilters is linear.
\begin{proof}
Suppose \(U_0\) and \(U_1\) are normal ultrafilters. Let \(M_0 = \text{Ult}(V,U_0)\) and \(M_1 = \text{Ult}(V,U_1)\), and let \(j_0 :V\to M_0\) and \(j_1 :V\to M_1\) be the ultrapower embeddings. Without loss of generality, assume that \(\textsc{crt}(j_0) = \textsc{crt}(j_1)\). Let us call this cardinal \(\kappa\). Let \(\langle W_0,W_1\rangle\) be a comparison of \(\langle U_0,U_1\rangle\) by internal ultrafilters to a common model \(N\). Thus there are internal ultrapower embeddings \(k_0 : M_0 \to N\) and \(k_1 : M_1\to N\) given by \(W_0\) and \(W_1\) such that \(k_0\circ j_{0} = k_1\circ j_{1}\). We may assume by symmetry that \(k_0(\kappa) \leq k_1(\kappa)\). 

Suppose first that \(k_0(\kappa) = k_1(\kappa)\). We claim that \(U_0 = U_1\). This is a consequence of the following calculation:
\begin{align*}X\in U_0 &\iff \kappa \in j_{0}(X)\\
&\iff k_0(\kappa) \in k_0(j_{0}(X))\\
&\iff k_1(\kappa) \in k_1(j_{1}(X))\\
&\iff \kappa \in j_{1}(X)\\
&\iff X \in U_1
\end{align*}
The second equivalence follows from the fact that \(k_0\circ j_{0} = k_1\circ j_{1}\) combined and the fact that \(k_0(\kappa) = k_1(\kappa)\).

Suppose instead that \(k_0(\kappa) < k_1(\kappa)\).  We claim that \(U_0\in \text{Ult}(V,U_1)\).  This is a consequence of the following calculation: for any \(X\subseteq \kappa\),
\begin{align*}X\in U_0 &\iff \kappa \in j_{0}(X)\\
&\iff k_0(\kappa) \in k_0(j_{0}(X))\\
&\iff k_0(\kappa) \in k_1(j_{1}(X))\\
&\iff k_0(\kappa) \in k_1(j_{1}(X))\cap k_1(\kappa)\\
&\iff k_0(\kappa) \in k_1(j_{1}(X)\cap \kappa)\\
&\iff k_0(\kappa) \in k_1(X)
\end{align*}
The fourth equivalence follows from the fact that \(k_0(\kappa) < k_1(\kappa)\). Since \(k_1\) is definable over \(M_1\), this calculation shows how to compute \(U_0\) within \(M_1\).
\end{proof}
\end{thm}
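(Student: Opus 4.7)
The plan is to apply the Ultrapower Axiom directly to the pair $\langle U_0, U_1\rangle$ after reducing to the case where both normal ultrafilters share the same critical point $\kappa$ (if the critical points differ, neither can be Mitchell-below the other, so linearity of $<_M$ is trivial for that pair). Writing $j_i : V \to M_i = \text{Ult}(V, U_i)$, I would fix a comparison $(W_0, W_1)$ to a common model $N$ and let $k_i : M_i \to N$ denote the associated internal ultrapower embeddings, noting the commutation $k_0 \circ j_0 = k_1 \circ j_1$ that UA supplies.

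The key observation is that the Mitchell relation between $U_0$ and $U_1$ should be controlled entirely by the ordinal comparison of $k_0(\kappa)$ and $k_1(\kappa)$ in $N$. By symmetry I assume $k_0(\kappa) \leq k_1(\kappa)$ and split into two subcases. In the equality case, I would start from $X \in U_i \iff \kappa \in j_i(X)$, apply $k_i$ to both sides, and use $k_0 \circ j_0 = k_1 \circ j_1$ together with $k_0(\kappa) = k_1(\kappa)$ to chase the equivalences and conclude $X \in U_0 \iff X \in U_1$, hence $U_0 = U_1$.

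In the strict case $k_0(\kappa) < k_1(\kappa)$, the goal is to show $U_0 \in M_1$, which yields $U_0 <_M U_1$. Starting with $X \in U_0 \iff k_0(\kappa) \in k_1(j_1(X))$ (via $k_0 \circ j_0 = k_1 \circ j_1$), I would exploit the strict inequality together with $j_1 \restriction \kappa = \mathrm{id}$, so that $j_1(X) \cap \kappa = X$, to truncate: the condition $k_0(\kappa) \in k_1(j_1(X))$ is equivalent to $k_0(\kappa) \in k_1(j_1(X)) \cap k_1(\kappa) = k_1(X)$. This exhibits $U_0$ as definable over $M_1$ from the ordinal parameter $k_0(\kappa)$ and the map $k_1 = j_{W_1}^{M_1}$, which is internal to $M_1$ precisely because $W_1$ is an internal ultrafilter there.

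The main obstacle is conceptual rather than technical: one must abandon the hope of reconstructing the ultrafilters from the internal structure of the comparison and instead read the Mitchell order off the single datum of where $\kappa$ goes under $k_0$ versus $k_1$. Once this viewpoint is adopted, both subcases are short manipulations of the defining equivalence $X \in U_i \iff \kappa \in j_i(X)$, and the only non-trivial ingredient is the internality clause of UA, which is exactly what makes $k_1$ (and hence the defining formula for $U_0$) available inside $M_1$.
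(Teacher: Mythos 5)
Your handling of the two main cases is exactly the paper's argument: the dichotomy on $k_0(\kappa)$ versus $k_1(\kappa)$, the equivalence chase through $k_0\circ j_0 = k_1\circ j_1$ in the equality case, the truncation $k_1(j_1(X))\cap k_1(\kappa) = k_1(j_1(X)\cap\kappa) = k_1(X)$ in the strict case, and the observation that internality of $W_1$ is what makes $k_1$ (hence the defining formula for $U_0$) available over $M_1$. On that portion there is nothing to add.

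The one genuine error is your justification of the reduction to $\textsc{crt}(U_0) = \textsc{crt}(U_1)$. You assert that if the critical points differ then \emph{neither} ultrafilter can be Mitchell-below the other, and that linearity is therefore ``trivial'' for such a pair. That is backwards: if $U_0 \neq U_1$ and neither is Mitchell-below the other, then linearity \emph{fails} for that pair, so your parenthetical, if it were true, would refute the theorem rather than dispose of a case. The correct observation is that if $\kappa_0 = \textsc{crt}(U_0) < \kappa_1 = \textsc{crt}(U_1)$, then $U_0 \mo U_1$ outright, with no appeal to the Ultrapower Axiom: $\kappa_1$ is inaccessible, so $U_0 \subseteq P(\kappa_0)$ has hereditary cardinality $2^{\kappa_0} < \kappa_1$, and $\text{Ult}(V,U_1)$ is closed under $\kappa_1$-sequences and therefore contains $U_0$. (If one reads the theorem as concerning normal ultrafilters on a single fixed cardinal, the issue evaporates, but then your parenthetical should simply be deleted rather than stated in this form.) With that substitution the reduction is legitimate and the rest of your argument goes through unchanged.
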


\section{Dodd solid ultrafilters}
\begin{defn}
The canonical wellorder of \([\text{Ord}]^{<\omega}\) is defined by \(a < b\) if \(\max ((b\setminus a)\cup (a\setminus b)) \in b\).
\end{defn}

Perhaps it is simpler to identify finite sets of ordinals \(a\in [\text{Ord}]^{<\omega}\) with descending sequences of ordinals, in which case the canonical wellorder is just the lexicographic order.

\begin{defn}
Suppose \(a\in [\text{Ord}]^{<\omega}\). Then \([a] = \{b \in [\text{Ord}]^{<\omega} : b < a\}\).
\end{defn}

\begin{defn}
Suppose \(a\in [\text{Ord}]^{<\omega}\). A countably complete ultrafilter \(U\) on \([a]\) is called a {\it uniform ultrafilter} if for all \(b < a\), \([b]\notin U\). In this case, \(a\) is called the {\it space} of \(U\) and is denoted by \(\textsc{sp}(U)\).
\end{defn}

We now put down the most important definition in the context of the Ultrapower Axiom and the key theorem regarding it, though we will neither prove nor use this result here. We will use the definition, but only in a superficial way. We note that it is motivated by the attempt to generalize \cref{normal}: recall that the key to the proof was to consider, given a comparison \(\langle W_0,W_1\rangle\) of a pair of normal ultrafilters \(\langle U_0, U_1\rangle\), whether \(j_{W_0}(\kappa) \leq j_{W_1}(\kappa)\).

\begin{defn}
The {\it seed order} is the binary relation \(\wo\) defined on uniform ultrafilters \(U_0\) and \(U_1\) by \(U_0\wo U_1\) if and only if there exists a comparison \(\langle W_0,W_1\rangle\) of \(\langle U_0,U_1\rangle\) by internal ultrafilters such that \(j_{W_0}([\text{id}]_{U_0}) \leq j_{W_1}([\text{id}]_{U_1})\).
\end{defn}

\begin{thm}
The following are equivalent.
\begin{enumerate}[(1)]
\item The Ultrapower Axiom.
\item The seed order wellorders the class of uniform ultrafilters.
\end{enumerate}
\end{thm}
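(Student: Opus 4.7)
The plan is to prove both directions of the equivalence. Direction $(2) \Rightarrow (1)$ is essentially the observation that a wellorder is linear: any two uniform ultrafilters are $\wo$-comparable and hence, by definition of $\wo$, admit a comparison by internal ultrafilters; a standard reduction passing from an arbitrary countably complete ultrafilter to a uniform one derived from its ultrapower embedding extends this to all countably complete ultrafilters, yielding UA.

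For $(1) \Rightarrow (2)$, I would show separately, under UA, that $\wo$ is (a) total, (b) antisymmetric, and (c) wellfounded on uniform ultrafilters. Totality is immediate: given uniform $U_0, U_1$, UA supplies a comparison $\langle W_0, W_1 \rangle$, and the seeds $j_{W_0}([\text{id}]_{U_0})$ and $j_{W_1}([\text{id}]_{U_1})$ are finite sets of ordinals in the common model, hence comparable in the canonical linear order. For antisymmetry, the key auxiliary is that the seed-order verdict is independent of the chosen comparison: given two comparisons of the same pair, UA allows them to be further compared into a single common model in which the images of the two seeds have a well-defined order. Granted this, if both $U_0 \wo U_1$ and $U_1 \wo U_0$ then some comparison $\langle W_0, W_1 \rangle$ yields $j_{W_0}([\text{id}]_{U_0}) = j_{W_1}([\text{id}]_{U_1})$, and the calculation from the equality case in the proof of \cref{normal}, with the seeds playing the role of $\kappa$, shows $U_0 = U_1$ as ultrafilters, the uniformity condition ensuring in addition that their spaces coincide.

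Wellfoundedness is the main obstacle. Suppose for contradiction that $U_0 \slwo U_1 \slwo U_2 \slwo \cdots$ is strictly descending. The strategy is to iterate UA along $\omega$, building a tower of internal ultrapowers
\[
\text{Ult}(V, U_0) = N_0 \to N_1 \to N_2 \to \cdots,
\]
producing a common direct limit $N_\omega$ with coherent embeddings $h_i : \text{Ult}(V, U_i) \to N_\omega$ arranged so that $h_0([\text{id}]_{U_0}) > h_1([\text{id}]_{U_1}) > h_2([\text{id}]_{U_2}) > \cdots$ in the canonical order as computed in $N_\omega$. Since every ultrafilter used at every stage is an internal ultrafilter of some transitive ultrapower of $V$ and therefore countably complete in $V$ (countable completeness being absolute between transitive models containing $\omega$), the direct limit $N_\omega$ is wellfounded. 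The descending sequence of seeds then contradicts the fact that the canonical order on $[\text{Ord}]^{<\omega}$ is a wellorder as computed in $N_\omega$.

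The principal technical challenge is coordinating the tower: at each stage one must extend $N_n$ by further internal ultrapowers so as to simultaneously compare with $\text{Ult}(V, U_{n+1})$ and preserve the already-established descent of all previously placed seeds. This orchestration rests on the well-definedness lemma underlying (b), which ensures that each fresh comparison respects the ordering of seeds already sitting inside $N_n$; without such a coherence statement, the seed-by-seed inequalities obtained piecewise from the $\slwo$ relations could easily fail to line up inside any single limit model.
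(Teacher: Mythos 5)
The paper does not actually prove this theorem: it is stated with the explicit disclaimer that the result ``will be neither proved nor used'' there, so there is no argument in the text to compare yours against. Judged on its own terms, your sketch identifies the right skeleton --- totality from UA, antisymmetry via independence of the seed-order verdict from the chosen comparison, wellfoundedness via an amalgamated tower --- but it leaves the two genuinely hard steps unestablished and justifies a third step incorrectly.

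First, the ``well-definedness lemma'' on which your antisymmetry argument rests is not a routine observation: comparing two comparisons of the same pair $\langle U_0,U_1\rangle$ requires amalgamating two internal ultrapower embeddings of $\text{Ult}(V,U_0)$ and two of $\text{Ult}(V,U_1)$ into a single model so that everything commutes, whereas UA as stated only compares ultrafilters of $V$ (not internal ultrafilters of the ultrapowers) and only one pair at a time. Making this precise --- and likewise establishing transitivity of $\wo$, which you never address but which is needed for ``wellorder'' --- requires a lemma to the effect that UA propagates to internal ultrafilters and through finite iterations; that is where most of the real work lives. Second, your justification for the wellfoundedness of $N_\omega$, namely that ``countable completeness is absolute between transitive models containing $\omega$,'' is false: an ultrafilter that is countably complete inside an inner model $N$ need not be countably complete in $V$, since the witnessing countable sequences need not belong to $N$. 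The wellfoundedness of countable linear iterations by internal countably complete ultrafilters is Gaifman's theorem, whose proof is a separate reflection-style argument, not an absoluteness triviality. Finally, you explicitly concede that you cannot yet coordinate the tower so that the seed inequalities survive into $N_\omega$; that coordination is exactly the amalgamation lemma above. So as written the proposal establishes totality and, modulo the standard derived-ultrafilter reduction, the direction $(2)\Rightarrow(1)$, but not antisymmetry, transitivity, or wellfoundedness.
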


The fact that one can {\it define} a wellorder of all uniform ultrafilters assuming a principle as general as the Ultrapower Axiom is quite surprising. For example, it has the following immediate consequence, which is not obvious from the statement of the Ultrapower Axiom.

\begin{cor}[Ultrapower Axiom]
Every uniform ultrafilter is ordinal definable.
\end{cor}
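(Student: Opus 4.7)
The plan is to extract ordinal definability directly from the wellordering supplied by the preceding theorem. The first observation is that the seed order $\wo$ is definable over $V$ without parameters: the definition invokes only ultrapowers, internal ultrafilters, representatives of the identity function, and the canonical wellorder of $[\text{Ord}]^{<\omega}$, none of which involve parameters. Under the Ultrapower Axiom, the preceding theorem asserts that $\wo$ is a wellorder of the class of all uniform ultrafilters, so we have in hand a parameter-free, definable wellorder of this class.

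Next I would convert this class wellorder into ordinal ranks. The cleanest route is to restrict to a fixed space: for any $a \in [\text{Ord}]^{<\omega}$, the collection $\mathcal{U}_a$ of uniform ultrafilters with space $a$ is a set (a subset of $P(P([a]))$), so the restriction of $\wo$ to $\mathcal{U}_a$ is a wellorder of a set and hence has an order type, yielding an ordinal rank $\rho(U)$ for each $U \in \mathcal{U}_a$. Because $\wo$ is itself definable without parameters, the assignment $U \mapsto \rho(U)$ is definable from $a$ alone.

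Given any uniform ultrafilter $U$, set $a = \textsc{sp}(U)$. Then $U$ is the unique uniform ultrafilter whose space is $a$ and whose $\wo$-rank inside $\mathcal{U}_a$ equals $\rho(U)$. This uniquely specifies $U$ using only the finitely many ordinals appearing in $a$ together with the single ordinal $\rho(U)$ as parameters, so $U$ is ordinal definable. There is no real obstacle beyond invoking the previous theorem; the only point where one must pause is to note that the restriction of the class wellorder $\wo$ to the set $\mathcal{U}_a$ is again a wellorder, which is immediate since any suborder of a wellorder is a wellorder.
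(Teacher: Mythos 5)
Your argument is correct and is exactly the intended one: the paper states this as an immediate consequence of the parameter-free definable wellorder given by the preceding theorem and supplies no further details. Your restriction to the set $\mathcal{U}_a$ of uniform ultrafilters with a fixed space $a$ is a clean way to obtain an ordinal rank without worrying about whether the class wellorder is set-like, and the resulting definition of $U$ from the finitely many ordinals in $a$ together with $\rho(U)$ is precisely what ordinal definability requires.
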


One can just as easily define the seed order for uniform ultrafilters on ordinals. The entire theory lifts through the canonical isomorphism between \(\text{Ord}\) and \([\text{Ord}]^{<\omega}\). The utility of working with sequences of ordinals should be clear from the following definitions. 

\begin{defn}
Suppose \(j:V\to M\) is an elementary embedding and \(c\) is a finite set of ordinals. Let \(a\in [\text{Ord}]^{<\omega}\) be least such that \(c\leq j(a)\). The {\it extender of \(j\) below \(c\)} is the function \(E : P([a]) \to V\) defined by \[E(X) = j(X) \cap [c]\]
If \(U\) is an ultrafilter, then \(U|c\) denotes the extender of \(j_U: V\to \text{Ult}(V,U)\) below \(c\).
\end{defn}

We remark that the extender of \(j\) below \(c\) is not in general an extender in the standard sense, but rather a finite collection of extenders. (The issue is that \([a]\) may not be closed under finite unions.) 

\begin{defn}
Suppose \(U\) is a nonprincipal uniform ultrafilter on \([a]\). Then \(U\) is {\it Dodd solid} if letting \(c = [\text{id}]_U\), \(U | c\in \text{Ult}(V,U)\).
\end{defn}

Thus an ultrafilter is Dodd solid if its ultrapower contains the longest possible initial segment of its extender. For example, every normal ultrafilter is trivially Dodd solid. More interestingly, if \(U\) is indexed on the extender sequence of an iterable Mitchell-Steel model satisfying ZFC, then \(U\) is Dodd solid (by a theorem of Steel). 

\begin{thm}[Ultrapower Axiom]\label{dodd}
The Mitchell order on Dodd solid ultrafilters is linear.
\end{thm}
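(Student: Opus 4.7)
The plan is to generalize the proof of Theorem \ref{normal}, using the seed $c_i = [\text{id}]_{U_i}$ in place of the critical point $\kappa$, with Dodd solidity furnishing the replacement for the trivial fact that $j_{U_1}(X) \cap \kappa = X$ for $X \subseteq \kappa$. Given two distinct Dodd solid ultrafilters $U_0, U_1$ on spaces $[a_0], [a_1]$, I would first invoke the Ultrapower Axiom to produce a comparison $\langle W_0, W_1 \rangle$ of $\langle U_0, U_1 \rangle$ to a common model $N$, yielding internal ultrapower embeddings $k_0 : M_0 \to N$ and $k_1 : M_1 \to N$ with $k_0 \circ j_{U_0} = k_1 \circ j_{U_1}$. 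Because the canonical wellorder is a linear order on $[\text{Ord}]^{<\omega}$, the finite sets $k_0(c_0)$ and $k_1(c_1)$ are comparable, and by symmetry I may assume $k_0(c_0) \leq k_1(c_1)$.

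The equality case $k_0(c_0) = k_1(c_1)$ is dispatched exactly as before: the chain
\[ X \in U_0 \iff c_0 \in j_{U_0}(X) \iff k_0(c_0) \in k_0(j_{U_0}(X)) = k_1(j_{U_1}(X)) \iff c_1 \in j_{U_1}(X) \iff X \in U_1 \]
gives $U_0 = U_1$, contradicting distinctness.

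In the strict case $k_0(c_0) < k_1(c_1)$, the goal is to compute $U_0$ inside $M_1$. Since $k_0(c_0) \in [k_1(c_1)]$, elementarity of $k_1$ yields
\[ X \in U_0 \iff k_0(c_0) \in k_1(j_{U_1}(X)) \cap [k_1(c_1)] \iff k_0(c_0) \in k_1\bigl(j_{U_1}(X) \cap [c_1]\bigr). \]
The crucial observation is that $j_{U_1}(X) \cap [c_1]$ depends only on $X \cap [a_1]$; explicitly, $j_{U_1}(X) \cap [c_1] = (U_1 | c_1)(X \cap [a_1])$, the intersection with $[a_1]$ being harmless because the uniformity of $U_1$ forces $c_1 < j_{U_1}(a_1)$ and hence $[c_1] \subseteq [j_{U_1}(a_1)]$. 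By the Dodd solidity of $U_1$, the function $U_1 | c_1$ belongs to $M_1$, and $k_1 = j_{W_1}^{M_1}$ is definable over $M_1$ from $W_1$. Therefore $U_0$ is definable over $M_1$ from the parameters $k_0(c_0)$, $W_1$, $U_1|c_1$, and $a_1$, all of which are elements of $M_1$ (the first because finite sets of ordinals belong to every inner model). Replacement in $M_1$ then yields $U_0 \in M_1$, that is, $U_0 \mo U_1$.

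The principal obstacle is the strict case: one must identify the precise point at which Dodd solidity is needed and keep track of the domain issue caused by the possibility that $a_0 \neq a_1$. Modulo this, the argument is a close analogue of the proof of Theorem \ref{normal}, with the seed order, rather than the comparison of critical points, serving as the abstract mechanism that organizes the trichotomy.
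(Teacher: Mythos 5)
Your proof is correct and follows essentially the same route as the paper's: the same comparison and trichotomy on \(k_0(c_0)\) versus \(k_1(c_1)\), with Dodd solidity placing \(E = U_1|c_1\) in \(M_1\) so that \(U_0\) can be computed there from \(E\), \(k_1\), and \(k_0(c_0)\). The only divergence is the domain issue: the paper invokes \cref{spaces} to get \(\textsc{sp}(U_0)\leq\textsc{sp}(U_1)\) so that \(X\) itself lies in \(\text{dom}(E)\), whereas you apply \(E\) to \(X\cap[a_1]\) and verify \(j_{U_1}(X)\cap[c_1]=E(X\cap[a_1])\) using \([c_1]\subseteq[j_{U_1}(a_1)]\) --- a valid shortcut that makes \cref{spaces} unnecessary here.
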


We will use in the proof the following basic fact about the seed order. We remark that this property is not shared by the Mitchell order. 

\begin{lma}\label{spaces}
Suppose \(U_0\) and \(U_1\) are uniform ultrafilters such that \(U_0\wo U_1\). Then \(\textsc{sp}(U_0) \leq \textsc{sp}(U_1)\).
\begin{proof}
Let \(a_0= \textsc{sp}(U_0)\) and \(a_1 = \textsc{sp}(U_1)\). Since \(U_0\) is uniform, \(a_0\) is the least \(a\in [\text{Ord}]^{<\omega}\) with the property that \([\text{id}]_{U_0}\in j_{U_0}([a])\). Thus to prove the lemma, it suffices to show that \([\text{id}]_{U_0}\in j_{U_0}([a_1])\). 

Of course, \([\text{id}]_{U_1}\in j_{U_1}([a_1])\). Let \(\langle W_0,W_1\rangle\) be a comparison of \(\langle U_0,U_1\rangle\) witnessing \(U_0\wo U_1\). Then \(j_{W_0}([\text{id}]_{U_0}) \leq j_{W_1}([\text{id}]_{U_1})\), but also \[j_{W_1}([\text{id}]_{U_1})\in j_{W_1}(j_{U_1}([a_1])) = j_{W_0}(j_{U_0}([a_1]))\] Hence \(j_{W_0}([\text{id}]_{U_0})\in j_{W_0}(j_{U_0}([a_1]))\) since \(j_{W_0}(j_{U_0}([a_1]))\) is downwards closed in the canonical wellorder on \([\text{Ord}]^{<\omega}\). But then by the elementarity of \(j_{W_0}\), \([\text{id}]_{U_0}\in j_{U_0}([a_1])\) as desired.
\end{proof}
\end{lma}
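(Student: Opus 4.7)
The plan is to reduce the desired inequality $a_0 \leq a_1$ (with $a_i = \textsc{sp}(U_i)$) to a single membership statement of the form $[\text{id}]_{U_0} \in j_{U_0}([a_1])$ and then verify that statement by transporting it through the comparison.

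First I would extract a minimality characterization of $a_0$ from uniformity of $U_0$. By \L o\'s's theorem, $[b] \in U_0$ is equivalent to $[\text{id}]_{U_0} \in j_{U_0}([b])$, so the uniformity clause $[b]\notin U_0$ for all $b < a_0$ says exactly that $a_0$ is the least $a$ for which $[\text{id}]_{U_0} \in j_{U_0}([a])$. Consequently, establishing $[\text{id}]_{U_0} \in j_{U_0}([a_1])$ is enough to conclude $a_0 \leq a_1$.

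To produce this membership I would pass through the common model using a witness $\langle W_0, W_1\rangle$ to $U_0 \wo U_1$. By elementarity of $j_{W_0}$, it suffices to show $j_{W_0}([\text{id}]_{U_0}) \in j_{W_0}(j_{U_0}([a_1]))$. The commutativity clause $j_{W_0}\circ j_{U_0} = j_{W_1}\circ j_{U_1}$ rewrites this target set as $j_{W_1}(j_{U_1}([a_1]))$, so the task becomes locating $j_{W_0}([\text{id}]_{U_0})$ inside the latter set. Two ingredients are already available: the same minimality argument applied to $U_1$ gives $[\text{id}]_{U_1}\in j_{U_1}([a_1])$, hence $j_{W_1}([\text{id}]_{U_1}) \in j_{W_1}(j_{U_1}([a_1]))$; and the seed-order hypothesis yields $j_{W_0}([\text{id}]_{U_0}) \leq j_{W_1}([\text{id}]_{U_1})$.

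The closing move is a downward-closure argument. The set $[a_1]$ is downward closed under the canonical wellorder by its very definition, and downward closure is first-order, so its image under any elementary embedding is again downward closed. Since $j_{W_0}(j_{U_0}([a_1]))$ contains $j_{W_1}([\text{id}]_{U_1})$ and $j_{W_0}([\text{id}]_{U_0})$ lies $\leq$-below that element, $j_{W_0}([\text{id}]_{U_0})$ lies in the same set; pulling back through $j_{W_0}$ finishes the proof. I do not anticipate a genuine obstacle; the only point that requires care is keeping the bookkeeping straight about which embedding's elementarity and which of the two expressions for the common composition $j_{W_0}\circ j_{U_0} = j_{W_1}\circ j_{U_1}$ one invokes at each step.
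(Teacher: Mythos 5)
Your proposal is correct and follows essentially the same route as the paper's proof: the same reduction of $\textsc{sp}(U_0)\leq\textsc{sp}(U_1)$ to $[\mathrm{id}]_{U_0}\in j_{U_0}([a_1])$ via uniformity, the same transport through the comparison using $j_{W_0}\circ j_{U_0}=j_{W_1}\circ j_{U_1}$ and the seed-order inequality, and the same downward-closure argument followed by pulling back along the elementarity of $j_{W_0}$. The only difference is cosmetic ordering of steps (you invoke elementarity at the start rather than the end), which does not change the argument.
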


\begin{proof}[Proof of \cref{dodd}]
Suppose \(U_0\) and \(U_1\) are Dodd solid ultrafilters. Let \(M_0 = \text{Ult}(V,U_0)\) and \(M_1 = \text{Ult}(V,U_1)\), and let \(j_0 :V\to M_0\) and \(j_1 :V\to M_1\) be the ultrapower embeddings. Let \(c_0 = [\text{id}]_{U_0}\) and \(c_1 = [\text{id}]_{U_1}\). Assume without loss of generality that \(U_0 \wo U_1\), and let \(\langle W_0,W_1\rangle\) be a comparison of \(\langle U_0,U_1\rangle\) by internal ultrafilters to a common model \(N\) witnessing this. Thus there are internal ultrapower embeddings \(k_0 : M_0 \to N\) and \(k_1 : M_1\to N\) given by \(W_0\) and \(W_1\) such that \(k_0\circ j_{0} = k_1\circ j_{1}\) and \(k_0(c_0) \leq k_1(c_1)\). 

If \(k_0(c_0) = k_1(c_1)\), then just as in \cref{normal}, one shows that \(U_0 = U_1\). (This is essentially the proof of the antisymmetry of the seed order.) If on the other hand \(k_0(c_0) < k_1(c_1)\), we claim that \(U_0\in \text{Ult}(V,U_1)\). Let \(E = U_1|c_1\) denote the extender of \(U_1\) below \(c_1\). Since \(U_1\) is Dodd solid, \(E\in \text{Ult}(V,U_1)\). 
 If \(X\subseteq [\textsc{sp}(U_0)]\),
\begin{align*}X\in U_0 &\iff c_0 \in j_{0}(X)\\
&\iff k_0(c_0) \in k_0(j_{0}(X))\\
&\iff k_0(c_0) \in k_1(j_{1}(X))\\
&\iff k_0(c_0) \in k_1(j_{1}(X))\cap [k_1(c_1)]\\
&\iff k_0(c_0) \in k_1(j_{1}(X)\cap [c_1])\\
&\iff k_0(c_0) \in k_1(E(X))
\end{align*}
The fourth equivalence follows from the fact that \(k_0(c_0) < k_1(c_1)\). The last equivalence requires \(P(\textsc{sp}(U_0))\subseteq \text{dom}(E)\), which we verify as follows: by \cref{spaces}, \(\textsc{sp}(U_0)\leq \textsc{sp}(U_1)\), and since \(P(\textsc{sp}(U_1)) = \text{dom}(E)\), it follows that \(P(\textsc{sp}(U_0))\subseteq \text{dom}(E)\). Since \(k_1\) is definable over \(M_1\) and \(E\in M_1\), this calculation shows how to compute \(U_0\) within \(M_1\).
\end{proof}

\section{Solovay's Lemma and Singular Cardinals}
The following remarkable theorem, due to Solovay, implies that if \(\lambda\) is a regular cardinal then any normal fine ultrafilter on \(P(\lambda)\) is Rudin-Keisler equivalent to a canonical ultrafilter on \(\lambda\) via the \(\sup\) function. 
\begin{thm}[Solovay's Lemma]\label{sollemma}
Suppose \(\lambda\) is a regular uncountable cardinal and \(\langle S_\alpha :\alpha <\lambda\rangle\) is a partition of \(\textnormal{cof}(\omega)\cap \lambda\) into stationary sets. If \(j:V\to M\) is an elementary embedding of \(V\) into an inner model \(M\) with \(j[\lambda]\in M\), then \(j[\lambda]\) is definable in \(M\) from the parameters \(j(\langle S_\alpha :\alpha <\lambda\rangle)\) and \(\sup j[\lambda]\).
\end{thm}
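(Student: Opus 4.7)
The plan is to exhibit $j[\lambda]$ inside $M$ as the set
\[
\bigl\{\alpha < j(\lambda) : T_\alpha \cap \eta \text{ is stationary in } \eta \text{ in } M\bigr\},
\]
where $\eta = \sup j[\lambda]$ and $\vec T = j(\vec S) = \langle T_\alpha : \alpha < j(\lambda)\rangle$. Write $C = j[\lambda]$. Two preliminaries drive the argument. First, $C$ is $\omega$-closed and unbounded in $\eta$: unboundedness is clear, and $\omega$-closure uses the $\omega$-continuity of $j$ (automatic in the paper's setting, where $j$ is an ultrapower by a countably complete ultrafilter) together with regularity of $\lambda$. Second, $\text{cf}^M(\eta) > \omega$: any countable cofinal sequence in $\eta$ inside $M$ would be bounded above by some $j(\gamma)$ where $\gamma < \lambda$ is the $V$-supremum of preimage upper bounds, but $j(\gamma) < \eta$, a contradiction.

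For $\alpha = j(\beta) \in j[\lambda]$, I would show $T_\alpha \cap \eta$ is $M$-stationary. Given any $D \in M$ club in $\eta$, alternate picks between $D$ and $C$ to build an $\omega$-sequence whose supremum lies strictly below $\eta$ (using $\text{cf}^M(\eta) > \omega$) and lies in both $D$ (by closure) and $C$ (by $\omega$-closure); iterating shows $D \cap C$ is unbounded in $\eta$. Pulling back, $D^* = \{\gamma < \lambda : j(\gamma) \in D\}$ is $\omega$-closed and unbounded in $\lambda$. Since $S_\beta \subseteq \text{cof}(\omega) \cap \lambda$ is stationary, it meets every $\omega$-club in $\lambda$ (the topological closure of an $\omega$-club is a club whose $\text{cof}(\omega)$-part lies back in the $\omega$-club). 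Choose $\gamma \in S_\beta \cap D^*$; then $j(\gamma) \in j(S_\beta) \cap D = T_\alpha \cap D$, and $j(\gamma) < \eta$.

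For the converse, suppose $\alpha \notin j[\lambda]$. The disjointness of the $T_\alpha$'s combined with the elementarity identity $\text{cf}^M(j(\gamma)) = j(\text{cf}^V(\gamma))$ forces $T_\alpha \cap C = \emptyset$: any $\delta \in C$ of $M$-cofinality $\omega$ is $j(\gamma)$ with $\text{cf}^V(\gamma) = \omega$, hence $\gamma \in S_\beta$ for a unique $\beta$, placing $\delta$ in $T_{j(\beta)} \ne T_\alpha$. Let $\bar C \in M$ be the closure of $C$ in $\eta$, a club in $\eta$ in $M$. The $\omega$-closure of $C$ forces every point of $\bar C \setminus C$ to have $M$-cofinality strictly greater than $\omega$, so the $M$-countably cofinal part of $\bar C$ lies inside $C$. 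Since $T_\alpha$ consists of points of $M$-cofinality $\omega$, we conclude $T_\alpha \cap \bar C \cap \eta = \emptyset$, witnessing nonstationarity of $T_\alpha \cap \eta$.

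The main obstacle will be keeping the cofinality bookkeeping straight across $V$ and $M$: the argument hinges on $\omega$-continuity of $j$, the transfer of cofinalities at ordinals in $\operatorname{range}(j)$, $\omega$-closure of $C$ inside $\eta$, the fact that stationary subsets of $\text{cof}(\omega) \cap \lambda$ meet every $\omega$-club, and the clean behavior of $\bar C$ at countable cofinalities—each must align properly to drive both directions through.
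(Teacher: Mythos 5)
Your proof is correct: it is the standard Solovay argument, identifying \(j[\lambda]\) in \(M\) as \(\{\alpha < j(\lambda) : j(\vec S)_\alpha \cap \sup j[\lambda] \text{ is stationary in } M\}\), with both directions carried out properly (the \(\omega\)-continuity you flag is in fact automatic for any elementary \(j:V\to M\), since \(j\) fixes \(\omega\) pointwise and sends a cofinal \(\omega\)-sequence to a cofinal \(\omega\)-sequence). The paper states the lemma without proof, so there is nothing to compare against; your argument is exactly the one the paper implicitly relies on in its later applications.
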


The key corollary of \cref{sollemma} makes no mention of the arbitrary stationary partition \(\langle S_\alpha :\alpha <\lambda\rangle\).

\begin{cor}\label{solcor}
Suppose \(\lambda\) is a regular uncountable cardinal and \(\mathcal U\) is a normal fine ultrafilter on \(P(\lambda)\). Then \(\mathcal U\) is Rudin-Keisler equivalent to the ultrafilter \[U = \{X \subseteq \lambda : {\sup}^{-1}[X] \in \mathcal U\}\]
\end{cor}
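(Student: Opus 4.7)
The plan is to apply \cref{sollemma} directly to the ultrapower embedding $j \colon V \to M$ associated to $\mathcal U$. Since $\mathcal U$ is a normal fine ultrafilter on $P(\lambda)$, the identity function represents $j[\lambda]$ in the ultrapower, so $j[\lambda] = [\mathrm{id}]_{\mathcal U} \in M$. Fix a partition $\vec S = \langle S_\alpha : \alpha < \lambda\rangle$ of $\mathrm{cof}(\omega)\cap \lambda$ into stationary sets, which exists by Solovay's classical stationary-splitting theorem since $\lambda$ is regular and uncountable. Then \cref{sollemma} supplies a formula $\varphi$ such that, in $M$, $j[\lambda]$ is the unique set defined from the parameters $j(\vec S)$ and $\sup j[\lambda]$ via $\varphi$.

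Next I would pull this definability statement back to $V$ by Los's theorem. Since $\sup j[\lambda]$ is represented in the ultrapower by $\sigma \mapsto \sup\sigma$ and $j[\lambda]$ is represented by $\mathrm{id}$, elementarity yields that for $\mathcal U$-almost every $\sigma$, the set $\sigma$ itself is the unique solution of $\varphi(\vec S, \sup \sigma)$ in $V$. Equivalently, there is a set $A \in \mathcal U$ on which $\sigma \mapsto \sup \sigma$ is injective, with a definable inverse from the fixed parameter $\vec S$. This is precisely what the corollary needs: the pushforward identity $X \in U \iff \sup^{-1}[X]\in \mathcal U$ is built into the definition of $U$, and injectivity of $\sup$ on a $\mathcal U$-large set promotes this pushforward relation into a genuine Rudin--Keisler equivalence.

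The main point requiring care is the verification that $j[\lambda] \in M$, which is where the hypotheses that $\mathcal U$ is normal and fine both enter essentially; without this one could not feed $j$ into \cref{sollemma} at all. Apart from that, the argument is a transparent application of Los's theorem to the already-established \cref{sollemma}, and I do not anticipate a serious obstacle. In spirit, the content of the corollary is that \cref{sollemma} was designed precisely to manufacture this sort of RK-equivalence: the parameter $\sup j[\lambda]$ encodes the RK-reducing function $\sup$, and the definability of $j[\lambda]$ from it is exactly what forces $\sup$ to be injective on a $\mathcal U$-large set.
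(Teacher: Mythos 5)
Your proof is correct and is essentially the intended argument: the paper states \cref{solcor} without proof as an immediate consequence of \cref{sollemma}, and the standard route is exactly what you do --- note $j[\lambda]=[\mathrm{id}]_{\mathcal U}\in M$ by normality and fineness, apply Solovay's lemma, and pull the definability back by Los to get injectivity of $\sup$ (with definable inverse) on a $\mathcal U$-large set, which upgrades the pushforward to a Rudin--Keisler equivalence.
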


An easy corollary is the following:

\begin{cor}
Suppose \(\lambda\) is a regular cardinal such that \(2^{<\lambda} = \lambda\). Suppose \(\mathcal U\) is a normal fine ultrafilter on \(P(\lambda)\). Then \(\mathcal U\) is Rudin-Keisler equivalent to a Dodd solid ultrafilter on \(\lambda\).
\end{cor}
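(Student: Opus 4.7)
The plan is to invoke Corollary \ref{solcor} to reduce the statement to showing that $U = \{X \subseteq \lambda : \sup^{-1}[X] \in \mathcal{U}\}$ is Dodd solid, since that corollary already identifies $U$ as a Rudin--Keisler equivalent of $\mathcal U$ on $\lambda$. Let $M = \text{Ult}(V, \mathcal U) = \text{Ult}(V, U)$ under the resulting identification of ultrapowers, and write $j = j_{\mathcal U} = j_U$. Then $[\text{id}]_U$ corresponds to the ordinal $\gamma := \sup j[\lambda]$, and the extender I must place in $M$ is $E : P(\lambda) \to M$ with $E(X) = j(X) \cap \gamma$.

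The crux is the identity
\[
j(X) \cap \gamma \;=\; \bigcup_{\beta < \lambda} j(X \cap \beta),
\]
valid for every $X \subseteq \lambda$. The inclusion $\supseteq$ is immediate, since $j(X \cap \beta) \subseteq j(X) \cap j(\beta)$ and $j(\beta) < \gamma$. For $\subseteq$, any $\alpha \in j(X) \cap \gamma$ satisfies $\alpha < j(\beta)$ for some $\beta < \lambda$ by cofinality of $j[\lambda]$ in $\gamma$, and then by elementarity $j(X) \cap j(\beta) = j(X \cap \beta)$, so $\alpha \in j(X \cap \beta)$. Hence $E$ is uniformly definable from $\lambda$ and the restriction $j \upharpoonright V_\lambda$.

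It thus suffices to place $j \upharpoonright V_\lambda$ into $M$. The hypothesis $2^{<\lambda} = \lambda$ ensures $|V_\alpha| \leq \lambda$ for every $\alpha < \lambda$, so $|V_\lambda| = \lambda$. Because $\mathcal U$ is a normal fine ultrafilter on $P(\lambda)$, $M$ is closed under $\lambda$-sequences from $V$; a straightforward rank induction using this closure and $2^{<\lambda} = \lambda$ shows $V_\lambda \subseteq M$ and in fact $V_\lambda \in M$, and then the graph of $j \upharpoonright V_\lambda$ is a $\lambda$-sized subset of $M$ in $V$ and hence lies in $M$. Interpreting the displayed identity inside $M$ then yields $E \in M$, the required Dodd solidity.

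I expect the identity of the second paragraph to be the main idea: it says $j(X) \cap \gamma$ contains no ``new'' ordinals beyond those already produced by $j$ on bounded initial segments of $X$, and its proof uses the cofinality of $j[\lambda]$ in $\gamma$ decisively. Everything after that is closure bookkeeping, and the role of $2^{<\lambda} = \lambda$ is precisely to make $V_\lambda$ of size $\lambda$ so that it fits inside the $\lambda$-closed model $M$.
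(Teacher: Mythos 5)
Your overall strategy is the right one and matches what the paper actually does: it omits a direct proof of this corollary and instead proves the generalization \cref{normalsolid}, whose first step is exactly your displayed identity --- the extender of \(j\) below \(\sup j[\lambda]\) is computed from the restriction of \(j\) to bounded subsets of \(\lambda\), using the cofinality of \(j[\lambda]\) in \(\sup j[\lambda]\). That part of your argument is correct.

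The gap is in the ``closure bookkeeping,'' which is where the hypothesis \(2^{<\lambda}=\lambda\) actually has to be used, and your use of it is wrong. It is not true that \(2^{<\lambda}=\lambda\) implies \(|V_\alpha|\leq\lambda\) for all \(\alpha<\lambda\): take \(\lambda=\kappa^+\) with \(\kappa\) being \(\kappa^+\)-supercompact under GCH (one of the central cases of the theorem). Then \(2^{<\lambda}=2^\kappa=\kappa^+=\lambda\), but \(|V_{\kappa+2}|=2^{2^\kappa}=\kappa^{++}>\lambda\) and \(|V_\lambda|=\beth_{\kappa^+}\gg\lambda\). Worse, \(V_\lambda\subseteq M\) genuinely fails here: fixing bijections \(b:V_{\kappa+1}\to\lambda\) and \(c:\lambda\to P_\kappa(\lambda)\) (both of which lie in \(M\) by \(\lambda\)-closure), the set \(\{b^{-1}[c^{-1}[X]]:X\in\mathcal U\}\) is an element of \(V_{\kappa+3}\subseteq V_\lambda\) from which \(M\) could recover \(\mathcal U\), so it cannot belong to \(M\). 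Hence neither \(V_\lambda\in M\) nor \(j\restriction V_\lambda\in M\) is available, and your argument as written breaks. The repair is simply to aim lower: your identity only needs \(j\restriction\bigcup_{\beta<\lambda}P(\beta)\), and \emph{this} set has cardinality \(2^{<\lambda}=\lambda\), consists of pairs of elements of \(M\), and therefore lies in \(M\) by closure under \(\lambda\)-sequences. With \(V_\lambda\) replaced by \(\bigcup_{\beta<\lambda}P(\beta)\) throughout the final paragraph, your proof is correct and coincides with the paper's argument for \cref{normalsolid} restricted to regular \(\lambda\) (where the additional step in that proof, passing from the extender below \(\lambda_*\) to the extender below the least generator \(\theta\geq\lambda_*\), is not needed).
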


We omit the proof here, and instead prove a generalization in \cref{normalsolid}. When it exists, we denote the (unique) Dodd solid ultrafilter associated to a normal fine ultrafilter \(\mathcal U\) by \(U_\mathcal U\).  We note that we already have the following consequence of the Ultrapower Axiom and Solovay's lemma. (The restriction to normal fine ultrafilters on \(P_\lambda(\lambda)\) entails no loss of generality by Kunen's inconsistency theorem.)

\begin{cor}[Ultrapower Axiom]
Suppose \(\lambda\) is a regular cardinal such that \(2^{<\lambda} = \lambda\). Then the Mitchell order wellorders the normal fine ultrafilters on \(P_\lambda(\lambda)\).
\begin{proof}
Suppose \(\mathcal U_0\) and \(\mathcal U_1\) are normal fine ultrafilters on \(P_\lambda(\lambda)\). Let \(U_0 = U_{\mathcal U_0}\) and \(U_1 = U_{\mathcal U_1}\). By \cref{dodd}, either \(U_0 \mo U_1\), \(U_1\mo U_0\), or \(U_0 = U_1\). In the latter case, it is easy to see that \(\mathcal U_0 = \mathcal U_1\). Thus assume without loss of generality that \(U_0 \mo U_1\), or equivalently that \(U_0 \in \text{Ult}(V,\mathcal U_1)\). Since \(\lambda\) is a regular cardinal and \(2^{<\lambda} = \lambda\), \(\lambda^{<\lambda} = \lambda\). Therefore since \(\text{Ult}(V,\mathcal U_1)\) is closed under \(\lambda\)-sequences, \(P(P_\lambda(\lambda))\subseteq \text{Ult}(V,\mathcal U_1)\) and the Rudin-Keisler reduction \(f:\lambda \to P_\lambda(\lambda)\) reducing \(\mathcal U_0\) to \(U_0\) is in \(\text{Ult}(V,\mathcal U_1)\). Since \[\mathcal U_0 = \{X\in P(P_\lambda(\lambda)) : f^{-1}(X)\in U_0\}\] and \(P(P_\lambda(\lambda))\), \(f\), and \(U_0\) are in \(\text{Ult}(V,\mathcal U_1)\), \(\mathcal U_0\) is in \(\text{Ult}(V,\mathcal U_1)\).
\end{proof}
\end{cor}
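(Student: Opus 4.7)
The plan is to reduce the wellordering question for normal fine ultrafilters on $P_\lambda(\lambda)$ to the linearity of the Mitchell order on Dodd solid ultrafilters on $\lambda$, already established as \cref{dodd}. Given two normal fine ultrafilters $\mathcal U_0, \mathcal U_1$ on $P_\lambda(\lambda)$, I would apply the preceding corollary to attach to each $\mathcal U_i$ its associated Dodd solid ultrafilter $U_i = U_{\mathcal U_i}$ on $\lambda$, which is Rudin-Keisler equivalent to $\mathcal U_i$ via the supremum map and therefore satisfies $\text{Ult}(V, U_i) = \text{Ult}(V, \mathcal U_i)$. By \cref{dodd} we then have $U_0 \mo U_1$, $U_1 \mo U_0$, or $U_0 = U_1$; in the equality case the shared ultrapower and the uniqueness of the associated Dodd solid ultrafilter immediately yield $\mathcal U_0 = \mathcal U_1$.

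So without loss of generality assume $U_0 \mo U_1$, equivalently $U_0 \in \text{Ult}(V, \mathcal U_1)$. The remaining task is to promote this to $\mathcal U_0 \in \text{Ult}(V, \mathcal U_1)$. For this I would exploit the standard $\lambda$-closure of $\text{Ult}(V, \mathcal U_1)$, a basic feature of normal fine ultrafilters on $P_\lambda(\lambda)$. The hypothesis $2^{<\lambda} = \lambda$ together with the regularity of $\lambda$ gives $\lambda^{<\lambda} = \lambda$, so $|P_\lambda(\lambda)| = \lambda$ and consequently $P(P_\lambda(\lambda)) \subseteq \text{Ult}(V, \mathcal U_1)$. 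In particular the Rudin-Keisler reduction $f : \lambda \to P_\lambda(\lambda)$ witnessing the equivalence of $\mathcal U_0$ and $U_0$ lies in $\text{Ult}(V, \mathcal U_1)$, and then $\mathcal U_0$ is recovered as the pullback $\{X \subseteq P_\lambda(\lambda) : f^{-1}(X) \in U_0\}$, definable inside $\text{Ult}(V, \mathcal U_1)$ from parameters all available there.

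I do not expect a serious obstacle: the preceding corollary does the heavy lifting of passing from $P_\lambda(\lambda)$ down to $\lambda$, \cref{dodd} supplies the linear order on the Dodd solid side, and the cardinal arithmetic assumption makes all the relevant sets small enough to live in the target ultrapower. The only mildly delicate point is verifying that $f$ itself (and not merely its existence in $V$) belongs to $\text{Ult}(V, \mathcal U_1)$, which follows from the $\lambda$-closure of the ultrapower together with the fact that $f$ is a function with domain of size $\lambda$.
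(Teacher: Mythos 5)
Your proposal is correct and follows essentially the same route as the paper's proof: pass to the associated Dodd solid ultrafilters via the preceding corollary, apply \cref{dodd} for the trichotomy, and in the strict case use \(\lambda^{<\lambda} = \lambda\) together with the \(\lambda\)-closure of \(\text{Ult}(V,\mathcal U_1)\) to place \(P(P_\lambda(\lambda))\) and the Rudin-Keisler reduction \(f\) inside the ultrapower, recovering \(\mathcal U_0\) as the pullback of \(U_0\) along \(f\). No gaps.
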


In the case that \(\lambda\) is singular, the seemingly trivial issue of whether the powerset of the space of \(\mathcal U_0\) lies in \(\text{Ult}(V,\mathcal U_1)\) will actually block the attempt to easily state some of our theorems about normal measures on \(P(\lambda)\) in terms of the Mitchell order. There is a slightly deeper issue here, which is that in general when \(\lambda\) is a singular cardinal, given a normal fine ultrafilter \(\mathcal U\) on \(P(\lambda)\), there seems to be no canonical choice of a subset \(X\) of \(P(\lambda)\) to which one can restrict \(\mathcal U\) in order to ensure that \(\mathcal U\restriction X\) is a uniform ultrafilter (in the standard sense of the word uniform); see \cref{counterintuitive} and the comments following it. In the regular case, \(P_\lambda(\lambda)\) works, but for singular cardinals \(P_\lambda(\lambda)\) is usually too large.

\cref{optimal} generalizes these theorems to all cardinals, though the following lemma shows that Solovay's theorem \cref{solcor} does not generalize naively to the singular case.

\begin{lma}\label{iota*lemma}
Suppose \(\lambda\) has cofinality \(\iota\) and \(j:V\to M\) is an elementary embedding. Let \(\iota_* = \sup j[\iota]\) and \(\lambda_* = \sup j[\lambda]\). Let \(g_0: \iota\to \lambda\) be the increasing enumeration of any closed cofinal subset \(\lambda\) of order type \(\iota\).
Then the ordinals \(\iota_*\) and \(\lambda_*\) are interdefinable in \(M\) from the parameter \(j(g_0)\). 
\begin{proof}
Note that \(\lambda_* = j(g_0)(\iota_*)\) since \(j\circ g_0[\iota]\) is cofinal in \(j(g_0)[\iota_*]\) and \(j(g_0)\) is continuous. Clearly this defines \(\lambda_*\) from \(\iota_*\) using \(j(g_0)\), but it also defines \(\iota_*\) from \(\lambda_*\) using \(j(g_0)\): \(\iota_*\) is the unique ordinal \(\alpha\) such that \(j(g_0)(\alpha) = \lambda_*\).
\end{proof}
\end{lma}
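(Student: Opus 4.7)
The strategy is to exploit the continuity of $g_0$ at limit ordinals---a consequence of the closedness of its range---and to transfer this continuity to $j(g_0)$ via elementarity. Once continuity is in hand, a single supremum manipulation will yield an explicit formula relating $\iota_*$ and $\lambda_*$, from which interdefinability in $M$ follows immediately.

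First I would unpack what elementarity buys us about $j(g_0)$. Since $g_0$ is a strictly increasing function from $\iota$ to $\lambda$ whose range is closed and cofinal in $\lambda$, elementarity shows that in $M$, $j(g_0)$ is a strictly increasing function from $j(\iota)$ into $j(\lambda)$ whose range is closed and cofinal in $j(\lambda)$. In particular $j(g_0)$ is continuous at every limit ordinal in its domain, and this is the only fact about $j(g_0)$ the argument will need.

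The main step is to verify the identity $\lambda_* = j(g_0)(\iota_*)$ (where, in the degenerate case $\iota_* = j(\iota)$, one reads the right-hand side via the natural continuous extension of $j(g_0)$ at the top of its domain). Since $\iota_*$ is a limit ordinal with $j[\iota]$ cofinal in it, continuity of $j(g_0)$ gives
\[ j(g_0)(\iota_*) = \sup\{j(g_0)(\alpha) : \alpha < \iota_*\} = \sup\{j(g_0)(j(\beta)) : \beta < \iota\} = \sup\{j(g_0(\beta)) : \beta < \iota\}. \]
Because $g_0[\iota]$ is cofinal in $\lambda$, the final supremum is exactly $\sup j[\lambda] = \lambda_*$.

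From here interdefinability inside $M$ from the parameter $j(g_0)$ is immediate: to recover $\lambda_*$ from $\iota_*$, evaluate $j(g_0)$ at $\iota_*$; to recover $\iota_*$ from $\lambda_*$, note that $j(g_0)$ is injective (being strictly increasing), so $\iota_*$ is the unique preimage of $\lambda_*$. I do not foresee any genuine obstacle; the one point requiring care is the continuity of $j(g_0)$ at $\iota_*$, which is precisely what the closedness assumption on the range of $g_0$ was chosen to guarantee.
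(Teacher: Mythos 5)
Your proposal is correct and follows essentially the same route as the paper: establish continuity of \(j(g_0)\) from the closedness of \(\operatorname{ran}(g_0)\) via elementarity, derive the identity \(\lambda_* = j(g_0)(\iota_*)\) by a supremum computation using cofinality of \(j[\iota]\) in \(\iota_*\) and of \(g_0[\iota]\) in \(\lambda\), and then read off interdefinability from injectivity of \(j(g_0)\). Your explicit handling of the degenerate case \(\iota_* = j(\iota)\) is a minor point the paper leaves implicit, but otherwise the arguments coincide.
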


Thus if \(\lambda\) is a singular cardinal and \(\mathcal U\) is a normal fine ultrafilter on \(P(\lambda)\), the ultrafilter derived from \(j_\mathcal U\) using \(\lambda_*\) is equivalent to the ultrafilter \(W\) derived from \(j_\mathcal U\) using \(\iota_*\), which is {\it not} Rudin-Keisler equivalent to \(\mathcal U\), since \(j_W\) is continuous at \(\iota^+\) while \(j_\mathcal U\) is not. In fact, \(W\) is Rudin-Keisler equivalent to the projection of \(\mathcal U\) to \(P(\iota)\), again by Solovay's lemma.

We state a lemma that is an immediate consequence of Solovay's lemma, just because we will apply it many times in the proof of \cref{optimal}.
\begin{lma}\label{toomany}
Suppose \(i:V\to N\) is an elementary embedding, \(\iota\) is a regular cardinal, and \(i[\iota]\in N\). Then for any \(f:\iota \to V\), \(i\circ f\) is in \(N\) and is definable in \(N\) from \(\sup i[\iota]\) and a point in the range of \(i\).
\begin{proof}
By Solovay's lemma, \(i[\iota]\) is definable in \(N\) from \(\sup i[\iota]\) and a point in the range of \(i\). But \(i\circ f = i(f)\circ i\restriction \iota\).
\end{proof}
\end{lma}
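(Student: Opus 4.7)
The plan is to reduce the problem to a one-line calculation using elementarity, after having first imported $i\restriction \iota$ into $N$ via Solovay's Lemma. Handle $\iota = \omega$ trivially (then $i\restriction \iota$ is the identity, assuming as usual that $i$ has uncountable critical point), and assume henceforth that $\iota$ is regular uncountable. Fix a partition $\vec{S} = \langle S_\alpha : \alpha < \iota\rangle$ of $\text{cof}(\omega)\cap \iota$ into stationary sets. Since $i[\iota]\in N$, applying \cref{sollemma} to $i$ yields that $i[\iota]$ is definable in $N$ from the parameters $i(\vec{S})$ and $\sup i[\iota]$.

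Next, I would observe that the function $i\restriction \iota$ can be recovered inside $N$ from the mere set $i[\iota]$: because $i$ is strictly increasing on ordinals, $i\restriction \iota$ is the unique order-isomorphism between the ordinal $\iota$ and the set of ordinals $i[\iota]$, and this order-isomorphism can be computed in $N$ (which contains $i[\iota]$ and knows $\iota$ as $i(\vec S)$'s domain length). Consequently $i\restriction \iota \in N$ and is definable there from $\sup i[\iota]$ together with $i(\vec S)$.

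For the final step, given any $f : \iota\to V$, elementarity of $i$ gives $i(f(\alpha)) = i(f)(i(\alpha))$ for every $\alpha < \iota$, which is precisely the functional identity
\[
i\circ f \;=\; i(f)\circ (i\restriction \iota).
\]
Both factors on the right lie in $N$: $i(f)$ trivially, and $i\restriction \iota$ by the previous paragraph. Hence $i\circ f \in N$, and it is definable in $N$ from $\sup i[\iota]$ together with the single point $i(\langle \vec S, f\rangle)$, which lies in $\text{ran}(i)$ as the lemma requires.

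There is essentially no obstacle here beyond correctly invoking Solovay's Lemma; the only minor packaging issue is combining the two range-parameters $i(\vec S)$ and $i(f)$ into the single parameter promised by the statement, which is handled by applying $i$ to the ordered pair $\langle \vec S, f\rangle$.
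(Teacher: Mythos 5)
Your proposal is correct and follows essentially the same route as the paper's (much terser) proof: invoke Solovay's lemma to get \(i[\iota]\), hence \(i\restriction\iota\), into \(N\), and then use the identity \(i\circ f = i(f)\circ (i\restriction\iota)\). The extra details you supply --- the trivial \(\iota=\omega\) case, recovering \(i\restriction\iota\) as the order isomorphism from \(\iota\) onto \(i[\iota]\), and packaging \(i(\vec S)\) and \(i(f)\) into the single range point \(i(\langle \vec S, f\rangle)\) --- are exactly the steps the paper leaves implicit.
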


We now prove the correct generalization of Solovay's lemma.

\begin{thm}\label{optimal}
Suppose \(\lambda\) is an uncountable cardinal and \(j\) is an elementary embedding from \(V\) to an inner model \(M\) such that \(j[\lambda]\in M\). Let \(\theta\) be the least generator of \(j\) greater than or equal to \(\sup j[\lambda]\). Then \(j[\lambda]\) is definable in \(M\) from \(\theta\) and a point in the range of \(j\).
\begin{proof}
By Solovay's lemma, we may assume \(\lambda\) is singular. To avoid trivialities, we also assume \(\textsc{crt}(j) < \lambda\). Let \(\iota\) denote the cofinality of \(\lambda\) and \(\iota_*\) denote \(\sup j[\iota]\). Let \(\lambda_*\) denote \(\sup j[\lambda]\). Let \(E\) be the extender of length \(\lambda_*\) derived from \(j\) and let \(M_E = \text{Ult}(V,E)\). Let \(\langle \gamma_\xi : \xi <\iota\rangle\) enumerate a cofinal set of regular cardinals below \(\lambda\). Let \(e : \iota \to \lambda_*\) be the function \(e(\xi) = \sup j[\gamma_\xi]\). Note that \(e\in M\) since \(j[\lambda]\) is in \(M\) and \(\langle \gamma_\xi : \xi <\iota\rangle\) is in \(M\). Let \(J\) be the ideal of bounded subsets of \(\iota\). We state the key observation: 
\begin{clm}\label{eclm} The equivalence class \([e]_J\) of \(e\) modulo \(J\) is definable in \(M\) from \(\lambda_*^{+M_E}\) and a point in the range of \(j\).\end{clm}
As a consequence of the proof of \cref{eclm} we will show the following:
\begin{clm}\label{lambda*}The embedding \(j\) has a generator above \(\lambda_*\) and its least generator \(\theta\) equals \(\lambda_*^{+M_E}\).\end{clm}
Assuming \cref{eclm} and \cref{lambda*}, the following claim completes the proof.
\begin{clm}\label{easyclm}
\(j[\lambda]\) is definable in \(M\) from \([e]_J\) and a point in the range of \(j\).
\begin{proof}[Proof of \cref{easyclm}]
For \(\xi < \iota\), let \(\mathcal T_\xi\) be a stationary partition of \(\text{cof}(\omega)\cap \gamma_\xi\). For any \(e'\in [e]_J\), for all sufficiently large \(\xi_0 < \iota\), \(j[\lambda]\) is the union over \(\xi \in [\xi_0, \iota)\) of the sets \(X_\xi\) obtained by applying Solovay's lemma to \(j(\mathcal T_\xi)\) and \(e'(\xi)\). In this way, \(j[\lambda]\) is definable from \([e]_J\) and \(\langle j(\mathcal T_\xi) :\xi < \iota\rangle\). Obviously \(\lambda_*\) is definable from \([e]_J\), and hence \(\iota_*\) is definable \(\lambda_*\) by \cref{iota*lemma}. Thus \(\langle j(\mathcal T_\xi) :\xi < \iota\rangle\) is definable from \([e]_J\) and a point in the range of \(j\) by \cref{toomany}. Thus \(j[\lambda]\) is definable in \(M\) from \([e]_J\) and a point in the range of \(j\).
\end{proof}
\end{clm}
We turn to the proof of \cref{eclm}, which constitutes the bulk of \cref{optimal}.
\begin{proof}[Proof of \cref{eclm}]
Denote by \(\mathscr D^M\) the product \(\prod_{\xi< \iota} j(\gamma_\xi)\cap M\). Note that this product is in \(M\) since \(\langle j(\gamma_\xi) : \xi < \iota\rangle\in M\) by \cref{toomany}. In fact, for any extender \(F\) derived from \(j\) with length in \((\sup j[\iota],\lambda_*]\), we denote the ultrapower by \(j_F:V\to M_F\), the factor embedding to \(M_E\) by \(k_{FE}: M_F\to M_E\), the factor embedding to \(M\) by \(k_F:M_F\to M\), and the product \(\prod_{\xi< \iota} j_F(\gamma_\xi)\cap M_F\) by \(\mathscr D^{M_F}\). Then \(\mathscr D^{M_F}\in M_F\) by \cref{toomany}. (Note that \cref{toomany} applies in this situation since \(j[\iota]\) is in the hull that collapses to \(M_F\) by Solovay's lemma, and so \(j_F[\iota]\) is in \(M_F\). In fact, \(j_F[\iota] = j[\iota]\), but this is not really relevant.) In particular \(\mathscr D^{M_E} = \mathscr D^M\cap M_E\) since \(j_E(\gamma_\xi) = j(\gamma_\xi)\) for all \(\xi < \iota\). 

We break the proof into two more claims.
\begin{sclm}\label{scaleclm}
In \(M\), there is a \(\lambda_*^{+M}\)-scale in \(\mathscr D^M/J\) that is definable from \(\iota_*\) and a point in the range of \(j\). Moreover, for any such scale \(\langle f_\xi : \xi < \lambda_*^{+M}\rangle\), in \(M_E\), \(\langle f_\xi : \xi < \lambda_*^{+M_E}\rangle\) is a scale in \(\mathscr D^{M_E}/J\).
\end{sclm}
\begin{proof}[Proof of \cref{scaleclm}]
If in \(M\) there is a scale in  \(\mathscr D^M\) of length \(\lambda_*^{+M}\), then there is one definable from \(\iota_*\) and a point in the range of \(j\): first of all, \(\mathscr D^M\) is definable from \(\iota_*\) and a point in the range of \(j\) by \cref{toomany}; second, \(\lambda_*^{+M}\) is definable in \(M\) from \(\iota_*\) and a point in the range of \(j\) by \cref{iota*lemma}; third, the class of points definable in \(M\) from \(\iota_*\) and a point in the range of \(j\) forms an elementary substructure of \(M\) by Los's theorem.

There is a \(\lambda_*^{+M}\)-scale in \(\mathscr D^M/J\) in \(M\) since \(|\mathscr D^M|^M = \lambda_*^{+M}\), combined with the standard fact that \(\mathscr D^M/J\) is \({\leq}\lambda_*\)-directed. We prove \(|\mathscr D^M|^M = \lambda_*^{+M}\). Indeed, \((\lambda_*^\iota)^M = \lambda_*^{+M}\cdot (2^\iota)^M\) by Solovay's theorem that SCH holds above a supercompact applied in \(M\): in \(M\), \(j(\kappa)\) is supercompact to \(\lambda_*\), since in \(V\), \(\kappa\) is supercompact to \(\lambda\) since we assumed \(\kappa < \lambda\). (Here we use Kunen's observation that the condition \(j(\kappa) > \lambda\) can be omitted in the definition of \(\lambda\)-supercompactness using his inconsistency theorem.) But \((2^\iota)^M < \lambda_*\), since in \(M\) there is a strongly inaccessible cardinal between \(\lambda\) and \(\lambda_*\). Towards this, let \(\langle \kappa_n : n < \omega\rangle\) denote the critical sequence of \(j\), and let \(n<\omega\) be least such that \(\lambda < \kappa_{n+1}\). Then \(\kappa_n < \lambda\) since we assumed \(\kappa_0 < \lambda\). Thus \(\kappa_{n+1} < \lambda_*\). Moreover since \(P(\lambda) \subseteq M\), \(\kappa_n\) is inaccessible, and hence \(\kappa_{n+1}\) is inaccessible in \(M\). (That \(P(\lambda)\subseteq M\) follows from \(j[\lambda]\in M\) since for any \(A\subseteq \lambda\), \(A = \{\alpha < \lambda: j(\alpha)\in j(A)\}\).)
\end{proof}

\begin{sclm}\label{Ecofinal}
The function \(e\) is an exact upper bound of \(\mathscr D^{M_E}/J\).
\end{sclm}
\begin{proof}[Proof of \cref{Ecofinal}]
We show first that \(\mathscr D^{M_E}\) is cofinal in \(e\). This follows from \cref{toomany}: by \cref{toomany}, \(j_E\circ f \in M_E\) for all \(f: \iota \to V\). The collection of all \(j_E\circ f\) for \(f\in\prod_{\xi< \iota}\gamma_\xi\) is clearly cofinal in \(e\), recalling that \(j_E\circ f = j\circ f\) for such \(f\).

Now we show that \(e\) is an upper bound of \(\mathscr D^{M_E}/J\). Suppose \(f \in \mathscr D^{M_E}\). For an extender \(F\) derived from \(j\) with length in \((\sup j[\iota],\lambda_*)\) and some \(\bar f\in M_F\), \(f = k_{FE}(\bar f)\) since \(M_E\) is the direct limit of such \(M_F\). By the elementarity of \(k_{FE}\), \(\bar f\in \mathscr D^{M_F}\). Since the length of \(F\) is strictly below \(\lambda_*\), the space of \(F\) is strictly below \(\lambda\). Thus for some \(\xi_0 < \iota\), \(j_F\) is continuous at all regular cardinals  \(\delta \geq \gamma_{\xi_0}\). Since \(\bar f\in \mathscr D^{M_F}\), \(\bar f(\xi) < j_F(\gamma_\xi)\) for \(\xi < \iota\). For \(\xi \in [\xi_0, \iota)\), we may therefore choose \(\alpha_\xi < \gamma_\xi\) such that \(\bar f(\xi) < j_F(\alpha_\xi)\). For \(\xi < \xi_0\), set \(\alpha_\xi = 0\). Let \(h = \langle \alpha_\xi : \xi < \iota\rangle\). Then \(\bar f <_J j_F(h)\), and so \(f = k_{FE}(\bar f) <_J j_E(h) < e\), as desired.
\end{proof}

Using the two subclaims, we prove \cref{eclm}. Fix by \cref{scaleclm} a scale \(\langle f_\xi : \xi < \lambda_*^+\rangle\) in \(\mathscr D^M\) definable from \(\iota_*\) and a point in the range of \(j\). Then \(\langle f_\xi : \xi < \lambda_*^{+M_E}\rangle\) is definable from \(\lambda_*^{+M_E}\) and a point in the range of \(j\) by \cref{iota*lemma}. By \cref{scaleclm}, \(\langle f_\xi : \xi < \lambda_*^{+M_E}\rangle\) is a scale in \(\mathscr D^{M_E}\). Thus \([e]_J\) is definable in \(M\) from \(\langle f_\xi : \xi < \lambda_*^{+M_E}\rangle\) as the equivalence class of any exact upper bound of \(\langle f_\xi : \xi < \lambda_*^{+M_E}\rangle\). \cref{eclm} follows from this and the definability of \(\langle f_\xi : \xi < \lambda_*^{+M_E}\rangle\).
\end{proof}
\begin{proof}[Proof of \cref{lambda*}]
Note that if \(\theta\) exists at all, \(\theta \geq \lambda_*^{+M_E}\), since the least generator of \(j\) above \(\lambda_*\) is a regular cardinal of \(M_E\) greater than or equal to \(\lambda_*\), and \(\lambda_*\) itself is not regular in \(M_E\) by \cref{toomany} with \(f\) set to \(\langle \gamma_\xi :\xi < \iota\rangle\).

To show that \(\lambda_*^{+M_E} = \theta\), it suffices to show that \(k_E(\lambda_*^{+M_E}) \neq \lambda_*^{+M_E}\), or in other words that \(\lambda_*^{+M} \neq \lambda_*^{+M_E}\). That is, we must show \(\lambda_*^{+M_E}\) is not a cardinal in \(M\). This follows from the fact that \(\{j\circ f : f\in \prod_{\xi < \iota} \gamma_\xi\}\) is in \(M\), has cardinality less than \(\lambda_*\) in \(M\) and, modulo \(J\), is cofinally interleaved with the \(<_J\)-increasing sequence \(\langle f_\xi : \xi < \lambda_*^{+M_E}\rangle\), which lies in \(M\).
\end{proof}
This completes the proof of \cref{optimal}.
\end{proof}
\end{thm}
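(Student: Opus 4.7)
The plan is to first reduce to the case where $\lambda$ is singular, since \cref{sollemma} handles regular $\lambda$ directly. Let $\iota = \text{cf}(\lambda)$ and fix a cofinal sequence $\langle \gamma_\xi : \xi < \iota\rangle$ of regular cardinals below $\lambda$. Define $e : \iota \to \text{Ord}$ by $e(\xi) = \sup j[\gamma_\xi]$, and set $\lambda_* = \sup j[\lambda]$, $\iota_* = \sup j[\iota]$. The driving observation is that $j[\lambda] = \bigcup_\xi j[\gamma_\xi]$, and each $j[\gamma_\xi]$ is recoverable in $M$ from $e(\xi)$ and $j(\mathcal T_\xi)$, for a stationary partition $\mathcal T_\xi$ of $\text{cof}(\omega)\cap\gamma_\xi$, via \cref{sollemma}. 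Since only a tail in $\xi$ matters for the union, it suffices to define the equivalence class $[e]_J$ modulo the bounded ideal $J$ on $\iota$ from $\theta$ and a point in the range of $j$; together with \cref{iota*lemma} and \cref{toomany}, this also supplies $\iota_*$, $\lambda_*$, and the parameter sequence $\langle j(\mathcal T_\xi) : \xi < \iota\rangle$.

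To recover $[e]_J$, my plan is to work in the product $\mathscr D^M = \prod_{\xi < \iota} j(\gamma_\xi) \cap M$ and characterize $e$ there, modulo $J$, as the exact upper bound of a definable scale. Let $E$ be the length-$\lambda_*$ extender derived from $j$, let $M_E = \text{Ult}(V, E)$, and let $k_E : M_E \to M$ be the factor map. The first step is to produce a scale $\langle f_\xi : \xi < \lambda_*^{+M}\rangle$ in $\mathscr D^M / J$ definable in $M$ from $\iota_*$ and a point in the range of $j$. The cardinal arithmetic $|\mathscr D^M|^M = \lambda_*^{+M}$ I would derive from Solovay's SCH-above-a-supercompact theorem applied inside $M$, using that $j(\textsc{crt}(j))$ is $\lambda_*$-supercompact in $M$ and that the critical sequence of $j$ deposits an $M$-inaccessible cardinal strictly between $\lambda$ and $\lambda_*$ which bounds $(2^\iota)^M$.

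Next I would prove that $e$ is an exact upper bound of $\mathscr D^{M_E}/J$. Cofinality follows from \cref{toomany}, since $j \circ f = j_E \circ f \in M_E$ whenever $f \in \prod_\xi \gamma_\xi$, and these functions are cofinal below $e$. For the upper-bound property I would use the direct-limit presentation of $M_E$ as the limit of $M_F$ over shorter extenders $F$ derived from $j$: each such $F$ has space bounded in $\lambda$, so $j_F$ is continuous at $\gamma_\xi$ for all sufficiently large $\xi$, and a given $\bar f \in \mathscr D^{M_F}$ is therefore dominated modulo $J$ by $j_F(h)$ for a suitable $h \in \prod_\xi \gamma_\xi$. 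Truncating the scale to length $\lambda_*^{+M_E}$ yields a cofinal subsequence in $\mathscr D^{M_E}/J$, so by uniqueness of exact upper bounds $[e]_J$ becomes definable in $M$ from $\lambda_*^{+M_E}$ and a point in the range of $j$.

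The last piece is identifying $\theta$ with $\lambda_*^{+M_E}$. One direction follows from $\text{cf}^{M_E}(\lambda_*) = \iota_* < \lambda_*$ (again by \cref{toomany} applied to $\langle \gamma_\xi : \xi < \iota\rangle$), which forces the least generator of $j$ above $\lambda_*$ to be an $M_E$-regular cardinal at least $\lambda_*^{+M_E}$. For the other direction I need $\lambda_*^{+M_E}$ to be non-cardinal in $M$: the set $\{j \circ f : f \in \prod_\xi \gamma_\xi\}$ lies in $M$, has $M$-cardinality below $\lambda_*$, and interleaves cofinally modulo $J$ with the scale of length $\lambda_*^{+M_E}$, so $\text{cf}^M(\lambda_*^{+M_E}) < \lambda_*$. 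I expect the main obstacle to be the scale argument itself: arranging that the scale has length exactly $\lambda_*^{+M}$ and that its restriction to $\lambda_*^{+M_E}$ remains a scale in $\mathscr D^{M_E}/J$. These hinge on the supercompactness available in $M$ and careful tracking of what is definable from points in the range of $j$, and they are also what forces the use of the extender $E$ rather than $j$ itself in pinning down $[e]_J$.
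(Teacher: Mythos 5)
Your proposal is correct and follows essentially the same route as the paper's proof: the same reduction to singular $\lambda$, the same function $e(\xi)=\sup j[\gamma_\xi]$ recovered modulo the bounded ideal as the exact upper bound of a scale in $\prod_\xi j(\gamma_\xi)$ definable from $\iota_*$ and a point in the range of $j$, the same use of SCH above a supercompact in $M$ and an $M$-inaccessible between $\lambda$ and $\lambda_*$ to get the scale length, the same direct-limit argument for the upper-bound property, and the same interleaving argument identifying $\theta$ with $\lambda_*^{+M_E}$. The step you flag as the main obstacle (that the truncation to length $\lambda_*^{+M_E}$ remains a scale in $\mathscr D^{M_E}/J$) is exactly the part the paper also leaves with the least detail, so your assessment of where the work lies is accurate.
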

\begin{rmk}By a theorem of Shelah applied in \(M\), one can choose \(\langle \gamma_\xi:\xi < \iota\rangle\) so that \(\prod_{\xi < \iota} \gamma_\xi/J\) has true cofinality \(\lambda^+\) in \(M\). (Without assuming GCH, one may have \(2^\iota > \lambda\) so the naive proof used in \cref{scaleclm} fails.) Returning to the proof of \cref{lambda*}, one shows that \(\text{cf}^M(\lambda_*^{+M_E}) = \lambda^+\).
\end{rmk}

As a consequence of \cref{optimal}, all normal fine ultrafilters \(\mathcal U\) on \(P(\lambda)\) project canonically to ultrafilters on an ordinal, though now the projection is only defined up to \(\mathcal U\)-equivalence. 

\begin{defn}
Suppose \(\lambda\) is an uncountable cardinal and \(\mathcal U\) is a normal fine ultrafilter on \(P(\lambda)\). Then \(U_\mathcal U\) denotes the uniform ultrafilter derived from \(\mathcal U\) using \(\theta\) where \(\theta\) is the least generator of \(\mathcal U\) greater than or equal to \(\sup j_\mathcal U[\lambda]\).
\end{defn}

The following is immediate from \cref{optimal}.

\begin{thm}
Suppose \(\lambda\) is an uncountable cardinal and \(\mathcal U\) is a normal fine ultrafilter on \(P(\lambda)\). Then \(\mathcal U\equiv_\textnormal{RK}U_\mathcal U\).
\end{thm}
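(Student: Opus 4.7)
The plan is to show that the canonical factor embedding from $\text{Ult}(V, U_\mathcal U)$ into $\text{Ult}(V, \mathcal U)$ induced by the derivation is the identity; this is the standard criterion for Rudin-Keisler equivalence between an ultrafilter and an ultrafilter derived from it via a single seed.

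First I would set up the factor embedding. Let $M = \text{Ult}(V, \mathcal U)$ and $N = \text{Ult}(V, U_\mathcal U)$, with ultrapower maps $j_\mathcal U : V \to M$ and $j_{U_\mathcal U} : V \to N$. Since $U_\mathcal U$ is the ultrafilter derived from $j_\mathcal U$ using the ordinal seed $\theta$, there is a unique elementary embedding $k : N \to M$ satisfying $k \circ j_{U_\mathcal U} = j_\mathcal U$ and $k([\mathrm{id}]_{U_\mathcal U}) = \theta$. The image of $k$ is precisely the Skolem hull in $M$ of $j_\mathcal U[V] \cup \{\theta\}$. To establish $\mathcal U \equiv_{\mathrm{RK}} U_\mathcal U$ it suffices to show $k$ is surjective, for then $k$ is an isomorphism intertwining $j_\mathcal U$ and $j_{U_\mathcal U}$, from which RK-equivalence follows immediately via the standard translation between ultrafilters and their ultrapower embeddings.

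Next I would invoke normality and fineness: since $\mathcal U$ is a normal fine ultrafilter on $P(\lambda)$, the seed $[\mathrm{id}]_\mathcal U$ is $j_\mathcal U[\lambda]$, and standard arguments show $M$ is generated (as a Skolem hull) by $j_\mathcal U[V] \cup \{j_\mathcal U[\lambda]\}$; equivalently, every element of $M$ has the form $j_\mathcal U(f)(j_\mathcal U[\lambda])$ for some $f \in V$. Hence to show $k$ is surjective it is enough to show that $j_\mathcal U[\lambda]$ lies in the range of $k$, i.e., that $j_\mathcal U[\lambda]$ is definable in $M$ from $\theta$ together with parameters from $j_\mathcal U[V]$.

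This is exactly the content of \cref{optimal}: by construction, $\theta$ is the least generator of $j_\mathcal U$ greater than or equal to $\sup j_\mathcal U[\lambda]$, and the theorem guarantees that $j_\mathcal U[\lambda]$ is definable in $M$ from $\theta$ and a point in $j_\mathcal U[V]$. Therefore $j_\mathcal U[\lambda] \in \mathrm{ran}(k)$, $k$ is surjective, and consequently $\mathcal U \equiv_{\mathrm{RK}} U_\mathcal U$. There is no real obstacle here beyond correctly citing \cref{optimal} and recalling the generation of $M$ by its seed; the heavy lifting has already been done in the proof of \cref{optimal}, and what remains is purely a matter of assembling the factor-embedding diagram.
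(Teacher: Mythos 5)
Your argument is correct and is precisely the intended one: the paper states the result is ``immediate from'' the generalized Solovay lemma and gives no further proof, and you have correctly supplied the standard factor-embedding details --- $M = \mathrm{Ult}(V,\mathcal U)$ is the hull of $j_\mathcal U[V]\cup\{j_\mathcal U[\lambda]\}$ by normality, and the theorem on definability of $j_\mathcal U[\lambda]$ from $\theta$ and a point in the range of $j_\mathcal U$ shows the factor map from $\mathrm{Ult}(V,U_\mathcal U)$ is surjective. No discrepancy with the paper.
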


The next proposition follows from the proof of \cref{optimal}

\begin{prp}\label{spacelemma}
Suppose \(\mathcal U\) is a normal fine ultrafilter on \(P(\lambda)\). Then \[\textsc{sp}(U_\mathcal U) = \begin{cases}\lambda&\textnormal{if }\textsc{crt}(\mathcal U) \leq \textnormal{cf}(\lambda) \leq \lambda\\
\lambda^+&\textnormal{if }\textnormal{cf}(\lambda)< \textsc{crt}(\mathcal U)\leq \lambda\end{cases}\]
\begin{proof}
We may assume by \cref{solcor} that \(\lambda\) is singular. By \cref{lambda*}, \(U_\mathcal U\) is derived from \(\mathcal U\) using \(\lambda_*^{+M_E}\) where \(\lambda_* = \sup j_\mathcal U[\lambda]\) and \(E\) is the extender of length \(\lambda_*\) derived from \(\mathcal U\).  

Suppose first that \(\textsc{crt}(\mathcal U) \leq \textnormal{cf}(\lambda)\). Clearly \(\textsc{sp}(U_\mathcal U) \geq \lambda\) since \(\lambda_* < \lambda_*^{+M_E}\). But since  \(\textsc{crt}(\mathcal U) \leq \textnormal{cf}(\lambda)\), \(j_\mathcal U\) is discontinuous at \(\lambda\) and so \(\lambda_* < j(\lambda)\). It follows that \(\lambda_*^{+M_E} < j(\lambda)\), and so \(\textsc{sp}(U_\mathcal U) \leq \lambda\). Thus \(\textsc{sp}(U_\mathcal U) = \lambda\).

Suppose instead that \(\textnormal{cf}(\lambda) < \textsc{crt}(\mathcal U)\). Then \(j(\lambda) = \lambda_*\). Since \(\lambda_*^{+M_E}\) is a generator of \(\mathcal U\), \(\lambda_*^{+M_E} \neq j(\lambda^+)\). Thus \(j(\lambda^+) > \lambda_*^{+M_E}\), so \(\textsc{sp}(U_\mathcal U) \leq \lambda^+\). Moreover, if \(\xi < \lambda^+\), then \(j_E(\xi) < \lambda^{+M_E}\) and hence \(j_E(\xi) = j_\mathcal U(\xi)\), since \(\lambda^{+M_E}\) is the critical point of the factor map from \(M_E\) to \(\text{Ult}(V,\mathcal U)\). Thus \(j_\mathcal U(\xi) < \lambda^{+M_E}\). Since \(\xi < \lambda^+\) was arbitrary, it follows that \(\textsc{sp}(U_\mathcal U) = \lambda^+\).
\end{proof}
\end{prp}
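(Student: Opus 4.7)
The plan is to dispose of the regular case via \cref{solcor}---if $\lambda$ is regular then $\text{cf}(\lambda) = \lambda$ and Solovay's lemma identifies $\textsc{sp}(U_\mathcal{U})$ with $\lambda$ via the $\sup$ projection, matching the first branch of the formula---and then to appeal to \cref{lambda*} for singular $\lambda$. The latter gives $\theta = \lambda_*^{+M_E}$, where $\lambda_* = \sup j_\mathcal{U}[\lambda]$ and $E$ is the extender of length $\lambda_*$ derived from $\mathcal{U}$. Throughout, I will track the factor embedding $k_E \colon M_E \to M = \text{Ult}(V,\mathcal{U})$, which is the identity on $\lambda_* + 1$ and has critical point exactly $\theta$; in particular $k_E(\theta) = \lambda_*^{+M}$. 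The space $\textsc{sp}(U_\mathcal{U})$ is then the least ordinal $\alpha$ with $j_\mathcal{U}(\alpha) > \theta$.

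In the case $\textsc{crt}(\mathcal{U}) \leq \text{cf}(\lambda)$, $j_\mathcal{U}$ is discontinuous at $\lambda$: picking a cofinal sequence in $\lambda$ of length $\text{cf}(\lambda)$, its pointwise image is bounded in $j_\mathcal{U}(\lambda)$, so $j_\mathcal{U}(\lambda) > \lambda_*$. Since $j_\mathcal{U}(\lambda)$ is a cardinal of $M$ strictly exceeding $\lambda_*$, it is at least $\lambda_*^{+M} = k_E(\theta) > \theta$, yielding $\textsc{sp}(U_\mathcal{U}) \leq \lambda$. Conversely, for any $\alpha < \lambda$, $j_\mathcal{U}(\alpha) < \lambda_* < \theta$, so $\textsc{sp}(U_\mathcal{U}) \geq \lambda$.

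In the case $\text{cf}(\lambda) < \textsc{crt}(\mathcal{U})$, $j_\mathcal{U}$ is continuous at $\lambda$, so $j_\mathcal{U}(\lambda) = \lambda_*$, and because $k_E$ fixes $\lambda_* + 1$ pointwise, also $j_E(\lambda) = \lambda_*$. By elementarity in $M_E$ this forces $j_E(\lambda^+) = \lambda_*^{+M_E} = \theta$. For any $\xi < \lambda^+$ we then have $j_E(\xi) < \theta = \textsc{crt}(k_E)$, so $j_\mathcal{U}(\xi) = k_E(j_E(\xi)) = j_E(\xi) < \theta$, giving $\textsc{sp}(U_\mathcal{U}) \geq \lambda^+$. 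On the other hand $j_\mathcal{U}(\lambda^+) = k_E(\theta) > \theta$, so $\textsc{sp}(U_\mathcal{U}) \leq \lambda^+$.

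The main obstacle is really the bookkeeping with the factor map $k_E$, especially the identification $j_E(\lambda^+) = \lambda_*^{+M_E}$ in the second case, which rests on $k_E$ being the identity on $\lambda_* + 1$ (so that $j_E(\lambda) = j_\mathcal{U}(\lambda) = \lambda_*$) together with $\theta = \textsc{crt}(k_E)$. Once \cref{lambda*} is in hand, the remaining ingredients---cardinal successors in ultrapowers and the (dis)continuity of $j_\mathcal{U}$ at $\lambda$---are routine.
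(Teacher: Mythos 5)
Your proof is correct and follows essentially the same route as the paper: reduce to singular \(\lambda\) via \cref{solcor}, invoke \cref{lambda*} to identify \(\theta=\lambda_*^{+M_E}\), and then split on whether \(j_\mathcal U\) is continuous at \(\lambda\), using \(\theta=\textsc{crt}(k_E)\) and \(j_E(\lambda^+)=\lambda_*^{+M_E}\) in the second case exactly as the paper does. Your explicit bookkeeping with \(k_E\) (e.g.\ deriving \(j_\mathcal U(\lambda^+)=k_E(\theta)>\theta\) rather than the paper's remark that a generator cannot equal \(j(\lambda^+)\)) is a cosmetic variation, and if anything makes the step ``\(\lambda_*^{+M_E}<j(\lambda)\)'' in the first case more transparent.
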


\cref{spacelemma} has a counterintuitive corollary.

\begin{cor}\label{counterintuitive}
Suppose \(\lambda\) is an uncountable cardinal and \(\mathcal U\) is a normal fine ultrafilter on \(P(\lambda)\). Then there is a set \(X\in \mathcal U\) such that \(|X| = \lambda^{{<}\textsc{crt}(\mathcal U)}\).
\end{cor}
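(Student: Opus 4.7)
The plan is to combine the Rudin--Keisler equivalence $\mathcal{U} \equiv_\textnormal{RK} U_\mathcal{U}$ from the preceding theorem with the value of $|[\textsc{sp}(U_\mathcal{U})]|$ computed in \cref{spacelemma}, and then enlarge the resulting set inside $P(\lambda)$ to achieve the target cardinality exactly. Write $\kappa = \textsc{crt}(\mathcal{U})$; note that $\kappa \leq \lambda$, since otherwise $\lambda^+$-completeness together with fineness would force the principal set $\{\lambda\}$ into $\mathcal{U}$.

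First I would fix a Rudin--Keisler reduction $g : [\textsc{sp}(U_\mathcal{U})] \to P(\lambda)$ witnessing $\mathcal{U} = g_*(U_\mathcal{U})$, and set $X_0 := g[[\textsc{sp}(U_\mathcal{U})]]$. Since $g^{-1}(X_0)$ is the whole domain of $g$, which lies in $U_\mathcal{U}$, we have $X_0 \in \mathcal{U}$; moreover $|X_0| \leq |[\textsc{sp}(U_\mathcal{U})]|$. By \cref{spacelemma} this upper bound is $\lambda$ when $\kappa \leq \textnormal{cf}(\lambda)$ and $\lambda^+$ when $\textnormal{cf}(\lambda) < \kappa$.

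Next I would verify $|X_0| \leq \lambda^{<\kappa}$ in both cases. When $|[\textsc{sp}(U_\mathcal{U})]| = \lambda$ this is trivial, since $\lambda = \lambda^1 \leq \lambda^{<\kappa}$. When $|[\textsc{sp}(U_\mathcal{U})]| = \lambda^+$ we have $\textnormal{cf}(\lambda) < \kappa$, and K\"onig's theorem gives $\lambda^+ \leq \lambda^{\textnormal{cf}(\lambda)} \leq \lambda^{<\kappa}$. Finally, since $\lambda^{<\kappa} \leq 2^\lambda = |P(\lambda)|$, I can choose any $X$ with $X_0 \subseteq X \subseteq P(\lambda)$ and $|X| = \lambda^{<\kappa}$; upward closure of $\mathcal{U}$ gives $X \in \mathcal{U}$, as required.

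The only substantive step is the K\"onig estimate in the singular case, which is precisely what makes the cofinality split in \cref{spacelemma} dovetail with the exponent $\lambda^{<\kappa}$ appearing in the statement; there is no genuine obstacle beyond that.
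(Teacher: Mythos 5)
Your proof is correct and follows the route the paper intends (the corollary is stated there without a printed proof): combine \(\mathcal U\equiv_{\textnormal{RK}}U_{\mathcal U}\) with the computation of \(\textsc{sp}(U_{\mathcal U})\) in \cref{spacelemma}. The only cosmetic difference is that the paper identifies \(\textsc{sp}(U_{\mathcal U})\) with \(|P_\kappa(\lambda)|=\lambda^{<\kappa}\) outright via Solovay's SCH theorem (as is done explicitly in the proof of \cref{spsc}), whereas you establish only the inequality \(\textsc{sp}(U_{\mathcal U})\le\lambda^{<\kappa}\) (trivially when \(\kappa\le\textnormal{cf}(\lambda)\), by K\"onig when \(\textnormal{cf}(\lambda)<\kappa\)) and then pad the image of the Rudin--Keisler reduction up to the exact cardinality using upward closure of \(\mathcal U\) in \(P(\lambda)\), which is equally valid.
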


One might expect that \(\mathcal U\) restricts to a uniform ultrafilter on \(P_\delta(\lambda)\) where \(\delta\) is the least ordinal such that \(j_\mathcal U(\delta) > \lambda\), but by \cref{counterintuitive}, this fails whenever \(\textsc{crt}(\mathcal U) \leq \text{cf}(\lambda) < \delta \leq \lambda\). (This only occurs past a huge cardinal.)

\begin{lma}\label{spsc}
Suppose \(\lambda\) is an uncountable cardinal and \(\mathcal U\) is a normal fine ultrafilter on \(P(\lambda)\). Then \(\textnormal{Ult}(V,\mathcal U)\) is closed under \(\textsc{sp}(U_\mathcal U)\)-sequences.
\begin{proof}
Note that \(\textnormal{Ult}(V,\mathcal U)\) is closed under \(P_\kappa(\lambda)\)-sequences where \(\kappa = \textsc{crt}(\mathcal U)\), since \(j[\lambda]\in \text{Ult}(V,\mathcal U)\) by normality and \(j[P_\kappa(\lambda)]\) is easily computed from \(j[\lambda]\). But by \cref{spacelemma}, \(\textsc{sp}(U_\mathcal U) = |P_\kappa(\lambda)|\).
\end{proof}
\end{lma}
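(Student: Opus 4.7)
The plan is to reduce the closure statement to the standard fact that if $j:V\to M$ is an elementary embedding and $j[A]\in M$ for some set $A$, then $M$ is closed under $|A|$-sequences. I would apply this with $A = P_\kappa(\lambda)$ where $\kappa = \textsc{crt}(\mathcal U)$, and then identify $|P_\kappa(\lambda)|$ with $\textsc{sp}(U_\mathcal U)$ via \cref{spacelemma}.

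First I would verify $j_\mathcal U[P_\kappa(\lambda)]\in M := \textnormal{Ult}(V,\mathcal U)$. By normality, $j_\mathcal U[\lambda] = [\textnormal{id}]_\mathcal U \in M$. For each $x\in P_\kappa(\lambda)$, $\kappa$-completeness gives $j_\mathcal U(x) = j_\mathcal U[x]$, so $j_\mathcal U[P_\kappa(\lambda)]$ is exactly $[j_\mathcal U[\lambda]]^{<\kappa}$, which is definable in $M$ from $j_\mathcal U[\lambda]$ and $\kappa$.

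Next I would cite (or quickly sketch) the standard fact that $j_\mathcal U[P_\kappa(\lambda)]\in M$ implies $M$ is closed under $P_\kappa(\lambda)$-indexed sequences. Given $(y_x : x\in P_\kappa(\lambda))$ with values in $M$, each $y_x$ has a representing function $g_x$ with $y_x = j_\mathcal U(g_x)([\textnormal{id}]_\mathcal U)$. Applying $j_\mathcal U$ to $x\mapsto g_x$ and restricting to $j_\mathcal U[P_\kappa(\lambda)]$, then reindexing via the inverse of $j_\mathcal U\restriction P_\kappa(\lambda)$ (recoverable in $M$ from $j_\mathcal U[\lambda]$ as the increasing enumeration) and evaluating at $[\textnormal{id}]_\mathcal U$, we recover the original sequence inside $M$. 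Since a $|P_\kappa(\lambda)|$-sequence can be reindexed as a $P_\kappa(\lambda)$-indexed one, $M$ is closed under $|P_\kappa(\lambda)|$-sequences.

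Finally I would identify $|P_\kappa(\lambda)|$ with $\textsc{sp}(U_\mathcal U)$ using \cref{spacelemma}. When $\kappa \leq \textnormal{cf}(\lambda)$, inaccessibility of $\kappa$ together with the boundedness of every $\mu$-sequence into $\lambda$ for $\mu < \kappa \leq \textnormal{cf}(\lambda)$ yields $\lambda^{<\kappa} = \lambda = \textsc{sp}(U_\mathcal U)$. When $\textnormal{cf}(\lambda) < \kappa$, $|P_\kappa(\lambda)| = \lambda^{\textnormal{cf}(\lambda)}$, and Solovay's theorem that SCH holds above a supercompact (applied to $\kappa$, which is $\lambda$-supercompact as the critical point of $\mathcal U$) delivers $\lambda^{\textnormal{cf}(\lambda)} = \lambda^+ = \textsc{sp}(U_\mathcal U)$. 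The only nontrivial step is this last cardinal-arithmetic identification in the singular case, which is the main, though quite mild, obstacle.
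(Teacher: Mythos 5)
Your proposal is correct and follows essentially the same route as the paper: closure of \(\textnormal{Ult}(V,\mathcal U)\) under \(P_\kappa(\lambda)\)-sequences via \(j_{\mathcal U}[P_\kappa(\lambda)] = [j_{\mathcal U}[\lambda]]^{<\kappa} \in \textnormal{Ult}(V,\mathcal U)\), followed by the identification \(\textsc{sp}(U_{\mathcal U}) = |P_\kappa(\lambda)|\) from \cref{spacelemma}. The only soft spot is the regular-cofinality computation \(\lambda^{<\kappa}=\lambda\), which for large \(\lambda\) needs Solovay's SCH theorem at singular cardinals below \(\lambda\) and not merely inaccessibility of \(\kappa\) plus boundedness, but the paper elides this point as well.
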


The next proof is really a trivial corollary of what we have already done except for the small amount of extender theory that is needed, and which we spell out in detail.

\begin{thm}\label{normalsolid}
Suppose \(\lambda = 2^{<\lambda}\) and \(\mathcal U\) is a normal fine ultrafilter on \(P(\lambda)\). Then \(U_\mathcal U\) is Dodd solid.
\begin{proof}
Let \(\theta = [\text{id}]_{U_\mathcal U}\), which by definition is the least generator of \(\mathcal U\) above \(\lambda_* = \sup j_\mathcal U[\lambda]\). Let \(E = U_\mathcal U | \theta\), the extender of \(U_\mathcal U\) below \(\theta\), which is of course the same as the extender of \(\mathcal U\) below \(\theta\). We must show that \(E \in \text{Ult}(V, U_\mathcal U)\), or in other words that \(E \in \text{Ult}(V, \mathcal U)\).

First of all, \(E | \lambda_*\in \text{Ult}(V, \mathcal U)\) since \(E | \lambda_*\) is easily computed from the restriction of \(j_\mathcal U\) to \(\bigcup_{\alpha< \lambda} P(\alpha)\), which is in \(\text{Ult}(V, \mathcal U)\) since \(\text{Ult}(V, \mathcal U)\) is closed under \(\lambda\)-sequences and \(\lambda = 2^{<\lambda}\).

This implies \(E\in \text{Ult}(V, \mathcal U)\) by the following argument. Note that \(E\) is the extender of length \(\theta\) derived from \(j_{E|\lambda_*}\) since \(\theta\) is the least generator of \(\mathcal U\) above \(\lambda_*\). But \(j_{E|\lambda_*}\restriction \text{Ult}(V, \mathcal U)\) can be defined over \(\text{Ult}(V, \mathcal U)\) by taking the ultrapower by \(E | \lambda_*\in \text{Ult}(V, \mathcal U)\) which is correctly computed in \(\text{Ult}(V, \mathcal U)\) by closure under \(\lambda\)-sequences. Now \(E\) is easily computed from \(j_{E|\lambda_*}\restriction P(\textsc{sp}(E))\), and \(P(\textsc{sp}(E)) \subseteq P(\textsc{sp}(U_\mathcal U)) \subseteq \text{Ult}(V, \mathcal U)\) by \cref{spsc}.
\end{proof}
\end{thm}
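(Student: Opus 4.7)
The plan is to reduce the Dodd solidity of $U_\mathcal U$ to two largely independent facts. Dodd solidity asks that $E := U_\mathcal U \mid \theta$ lie in $\text{Ult}(V, U_\mathcal U) = \text{Ult}(V, \mathcal U)$, where $\theta = [\text{id}]_{U_\mathcal U}$ is by definition the least generator of $\mathcal U$ above $\lambda_* = \sup j_\mathcal U[\lambda]$. I would split the argument into (a) showing that the initial segment $E \mid \lambda_*$ already lies in the ultrapower, and (b) bootstrapping from this initial segment up to the full extender $E$ of length $\theta$.

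For step (a), I would observe that $E \mid \lambda_*$ is computable from the restriction of $j_\mathcal U$ to $\bigcup_{\alpha < \lambda} P(\alpha)$. Under the hypothesis $\lambda = 2^{<\lambda}$, this collection has cardinality $\lambda$, so the restriction is codable as a $\lambda$-sequence in $V$. Since $\mathcal U$ is normal and fine on $P(\lambda)$, $\text{Ult}(V, \mathcal U)$ is closed under $\lambda$-sequences, so this sequence is in the ultrapower, and $E \mid \lambda_*$ can be decoded from it there.

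For step (b), the key observation is that because $\theta$ is the \emph{least} generator of $\mathcal U$ above $\lambda_*$, the canonical factor map from $M_{E \mid \lambda_*}$ to $\text{Ult}(V, \mathcal U)$ has critical point at least $\theta$; equivalently, $E$ coincides with the length-$\theta$ extender derived from $j_{E \mid \lambda_*}$. Working inside $\text{Ult}(V, \mathcal U)$, I can form the internal ultrapower by $E \mid \lambda_*$ (correctly, by $\lambda$-closure), and thereby compute $j_{E \mid \lambda_*} \restriction \text{Ult}(V, \mathcal U)$. Applying this embedding to subsets of $\textsc{sp}(E)$ and using the seed $\theta$, I recover $E$ inside the ultrapower. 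The final ingredient is that all subsets of $\textsc{sp}(E)$ are available: since $\textsc{sp}(E) \leq \textsc{sp}(U_\mathcal U)$ and by \cref{spsc} the ultrapower is closed under $\textsc{sp}(U_\mathcal U)$-sequences, $P(\textsc{sp}(E)) \subseteq \text{Ult}(V, \mathcal U)$.

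The main conceptual obstacle I expect is the identification in step (b) that $E$ equals the length-$\theta$ extender derived from $j_{E \mid \lambda_*}$. This is exactly where the minimality of $\theta$ does its work, and care is needed to rule out any intermediate generator of $\mathcal U$ strictly between $\lambda_*$ and $\theta$ which would prevent the factor embedding from fixing $\theta$. Once this extender-theoretic identification is in hand, the cardinal arithmetic in step (a) and the final closure argument invoking \cref{spsc} are essentially bookkeeping.
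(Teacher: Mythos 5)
Your proposal is correct and follows essentially the same route as the paper's proof: the same two-step decomposition into first obtaining \(E\mid\lambda_*\) in the ultrapower via \(\lambda\)-closure and \(\lambda=2^{<\lambda}\), and then recovering the full length-\(\theta\) extender by internally computing \(j_{E\mid\lambda_*}\) and invoking \cref{spsc} for \(P(\textsc{sp}(E))\subseteq\text{Ult}(V,\mathcal U)\). The identification of \(E\) with the length-\(\theta\) extender derived from \(j_{E\mid\lambda_*}\) via the minimality of \(\theta\) is exactly the point the paper makes as well.
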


We extend the Mitchell order slightly in order to state our main theorem more simply.

\begin{defn}\label{invariantdef}
The {\it invariant Mitchell order} is the relation \(<^*_{M}\) defined for countably complete ultrafilters \(U_0\) and \(U_1\) by \(U_0 <^*_{M} U_1\) if and only if for some \(U_0'\equiv_{\text{RK}} U_0\), \(U'_0 \mo U_1\).
\end{defn}

\begin{lma}
The invariant Mitchell order is a strict wellfounded partial order on the class of nonprincipal countably complete ultrafilters.
\end{lma}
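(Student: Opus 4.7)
The plan is to exploit that $<^*_M$ is essentially $\mo$ transported to RK-equivalence classes. First I would establish that $<^*_M$ is RK-invariant on both sides: on the left by definition, and on the right because $\text{Ult}(V, U_1) = \text{Ult}(V, U_1')$ whenever $U_1 \equiv_{\textnormal{RK}} U_1'$, so the condition ``lies in $\text{Ult}(V, U_1)$'' depends only on the RK-class of $U_1$. Consequently $<^*_M$ descends to a relation on RK-classes, which lets us freely rechoose representatives on either side when it is convenient.

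Wellfoundedness reduces to that of $\mo$. Given a hypothetical infinite $<^*_M$-descending sequence $\langle U_n : n < \omega\rangle$, I inductively pick representatives $W_n \equiv_{\textnormal{RK}} U_n$ with $W_{n+1} \mo W_n$: given $W_n$, right-side RK-invariance gives $\text{Ult}(V, W_n) = \text{Ult}(V, U_n)$, and the descent $U_{n+1} <^*_M U_n$ supplies some $W_{n+1} \equiv_{\textnormal{RK}} U_{n+1}$ lying in this ultrapower. The resulting infinite $\mo$-chain contradicts the classical wellfoundedness of the Mitchell order on countably complete ultrafilters. Irreflexivity is then immediate: if $U <^*_M U$, pick $U' \equiv_{\textnormal{RK}} U$ with $U' \mo U$; since $\text{Ult}(V, U) = \text{Ult}(V, U')$, we get $U' \mo U'$, contradicting irreflexivity of $\mo$.

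The main effort — and the main obstacle — is transitivity. Given $U_0 <^*_M U_1 <^*_M U_2$, I pick $W_0 \equiv_{\textnormal{RK}} U_0$ in $M_1 := \text{Ult}(V, U_1) = \text{Ult}(V, W_1)$ and $W_1 \equiv_{\textnormal{RK}} U_1$ in $M_2 := \text{Ult}(V, U_2)$, so that $W_0 \mo W_1$ and $W_1 \mo U_2$, and my goal is to locate some $W_0^* \equiv_{\textnormal{RK}} U_0$ in $M_2$. The strategy is to form within $M_2$ the internal ultrapower $N = \text{Ult}(M_2, W_1)$ along with the canonical factor embedding $k : N \to M_1$, and to produce an $N$-ultrafilter whose $V$-RK-class matches that of $U_0$; the natural candidate is the $k$-preimage of $W_0$, or, after replacing $W_0$ by its absolutely defined canonical uniform representative on $[\textnormal{Ord}]^{<\omega}$, the preimage of that. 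The delicate step — which I expect to be the main obstacle — is verifying that $V$-RK-equivalence, rather than merely $N$- or $M_1$-internal RK-equivalence, is preserved by this construction; this should follow from the absoluteness of the canonical uniform representative across the relevant inner models, combined with the elementarity of $k$, so that any $N$-ultrafilter whose $k$-image has the same canonical uniform representative as $W_0$ in $M_1$ is $V$-RK-equivalent to $U_0$.
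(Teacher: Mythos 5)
The paper states this lemma without proof, so there is no argument of record to compare yours against; I will assess the proposal on its own terms. Your observations about RK-invariance on both sides, the reduction of wellfoundedness of \(<^*_M\) to wellfoundedness of \(\mo\), and irreflexivity are all correct. (One caveat: the wellfoundedness of the generalized Mitchell order on arbitrary countably complete ultrafilters is itself a nontrivial theorem rather than a classical fact, but since the paper treats the entire lemma as known, citing it is fair.)

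The transitivity argument, however, has a genuine gap, and it sits upstream of where you locate the difficulty. The ``canonical factor embedding'' \(k : N \to M_1\), where \(N = \text{Ult}(M_2, W_1)\) and \(M_1 = \text{Ult}(V, W_1)\), does not exist as an elementary embedding: the natural comparison map \([f]^{M_2}_{W_1} \mapsto [f]^{V}_{W_1}\) fails Los's theorem because the two ultrapowers are formed over different base models, and the internal ultrapower \(N\) in fact factors elementarily into \(j_{W_1}(M_2)\), an inner model of \(M_1\), rather than into \(M_1\) itself. Consequently \(W_0\) need not lie in the range of any such map; and even when it does, the preimage is a genuinely different set whose status as an ultrafilter --- let alone as a \(V\)-RK-copy of \(U_0\) --- is not guaranteed, since the map is at best an \(\in\)-embedding and not elementary into \(M_1\). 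This is exactly the obstruction that makes the non-invariant generalized Mitchell order fail to be transitive: \(U_0 \in \text{Ult}(V,U_1)\) is witnessed by a function \(f\) with \(U_0 = [f]_{U_1}\) that need not belong to \(M_2\), and when it does not, \(\text{Ult}(M_2, U_1)\) simply does not see \(U_0\). A correct proof must exploit the freedom to replace \(U_0\) by an RK-copy in an essential, quantitative way --- for instance by passing to a representative whose space and defining data are small enough (cf. \cref{spaces} and the role of \(P(\textsc{sp}(U_0))\) in the proof of \cref{dodd}) to be absorbed by \(M_2\) --- and your proposal does not supply such an argument.
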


\begin{defn}
Suppose \(U\) is an ultrafilter and \(X\in U\). Then \(U\restriction X\) denotes the ultrafilter \(\{Y\subseteq X : Y\in U\}\).
\end{defn}

Restricted to normal ultrafilters on \(P(\lambda)\), the uniform Mitchell order is equivalent to a seemingly stronger relation.

\begin{lma}\label{strnn}
Suppose \(U_0 <_M^* U_1\) and for some \(X\in U_0\), \(X\in \textnormal{Ult}(V,U_1)\). Then for some \(Y\in U_0\), \(U_0 \restriction Y\mo U_1\).
\end{lma}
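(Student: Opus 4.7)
The plan is to show $U_0 \restriction X \in M_1 := \textnormal{Ult}(V,U_1)$, so that $Y = X$ suffices. Fix $U_0' \equiv_\textnormal{RK} U_0$ witnessing $U_0 <_M^* U_1$, with $U_0' \in M_1$ on space $B$, and fix $f \colon B \to A$ in $V$ with $f_*(U_0') = U_0$, where $A$ is the space of $U_0$. The fundamental observation is that $P(B)^V \subseteq M_1$: for each $W \subseteq B$ in $V$, one of $W, B \setminus W$ lies in $U_0' \subseteq M_1$, so both lie in $M_1$ by complementation inside $M_1$. This yields a \emph{transport principle}: whenever $C \in M_1$ admits an $M_1$-bijection onto a subset of $B$ --- in particular whenever $|C|^{M_1} \leq |B|^{M_1}$ --- we have $P(C)^V \subseteq M_1$, since $M_1$-bijections carry $V$-subsets to $V$-subsets of $B$ (which lie in $M_1$) and back.

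Assume for now the size condition $|X|^{M_1} \leq |B|^{M_1}$, discussed below. Applying the transport principle to $C = B \times X$ gives $P(B \times X)^V \subseteq M_1$, so the graph of $f \restriction f^{-1}(X)$, a $V$-subset of $B \times X$, lies in $M_1$; that is, $f \restriction f^{-1}(X) \in M_1$. Applying the principle to $C = X$ gives $P(X)^V \subseteq M_1$. Writing $Z = f^{-1}(X) \in U_0' \subseteq M_1$, for each $W \in P(X)^V$ we have $W \in U_0 \iff (f \restriction Z)^{-1}(W) \in U_0'$, and the right-hand side is an $M_1$-condition since $f \restriction Z \in M_1$ and $U_0' \in M_1$. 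Hence
\[U_0 \restriction X = \{W \in P(X)^{M_1} : (f \restriction Z)^{-1}(W) \in U_0'\}\]
belongs to $M_1$.

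The main obstacle, and what I expect to be the technical heart of the proof, is arranging the size condition $|X|^{M_1} \leq |B|^{M_1}$. My approach is to replace $U_0'$ with a Rudin-Keisler equivalent ultrafilter in $M_1$ on a larger space containing an isomorphic copy of $X$ --- for instance, something obtained from the $M_1$-level inclusion $B \hookrightarrow B \sqcup X$ together with a corresponding extension of $f$. The key subtlety is that the resulting ultrafilter must be a genuine $V$-ultrafilter in $M_1$, not merely an $M_1$-ultrafilter, which via the transport principle amounts to securing $P(B \sqcup X)^V \subseteq M_1$; this can be handled by a careful auxiliary choice of RK-equivalent representative and by bookkeeping that all sets appearing in the would-be ultrafilter are indeed captured by $M_1$, after which the argument of the previous paragraph applies.
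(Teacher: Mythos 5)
Your opening observation is the right starting point: since \(U_0'\) is a \(V\)-ultrafilter that is an \emph{element} of \(M_1 = \textnormal{Ult}(V,U_1)\), complementation gives \(P(B)^V\subseteq M_1\), and this transports along \(M_1\)-injections into \(B\). But the plan to conclude with \(Y=X\) cannot succeed, and your proposed repair points in exactly the wrong direction. Note that \(U_0\restriction Y\in M_1\) \emph{forces} \(P(Y)^V\subseteq M_1\): for \(R\subseteq Y\) in \(V\), one of \(R\), \(Y\setminus R\) lies in \(U_0\restriction Y\subseteq M_1\), hence both do. In the lemma's intended application \(X = P(\lambda)\) and \(M_1 = \textnormal{Ult}(V,\mathcal U_1)\), and \(P(P(\lambda))^V\subseteq M_1\) is not available in general; that is precisely why the conclusion allows passing to some \(Y\in U_0\) rather than asserting \(U_0\restriction X\mo U_1\). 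For the same reason the suggested fix --- replacing \(U_0'\) by an RK-equivalent \(V\)-ultrafilter in \(M_1\) living on a space that contains a copy of \(X\) --- is impossible whenever \(P(X)^V\not\subseteq M_1\): by your own first observation, any such ultrafilter would immediately yield \(P(X)^V\subseteq M_1\). No choice of representative or bookkeeping can evade this, since \(X\) is fixed by the hypothesis.

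The correct move is the opposite one: shrink rather than enlarge. Rudin--Keisler equivalence gives \(B''\in U_0'\), \(Y\in U_0\) with \(Y\subseteq X\), and a bijection \(h:B''\to Y\) carrying \(U_0'\restriction B''\) to \(U_0\restriction Y\); then \(U_0\restriction Y = \{h[T] : T\in U_0'\restriction B''\}\), and your transport machinery finishes \emph{provided} \(h\) (equivalently, an \(M_1\)-injection of \(Y\) into \(B\)) belongs to \(M_1\). That provision is the actual content of the lemma, and your proposal does not engage with it: \(h\) is a priori only a \(V\)-object, a subset of \(B''\times X\) to which the transport principle does not apply, so one cannot bootstrap it into \(M_1\) from \(P(B)^V\subseteq M_1\) alone. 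In the application it is supplied by outside information --- \(\textnormal{Ult}(V,\mathcal U_1)\) is closed under \(\textsc{sp}(U_{\mathcal U_0})\)-sequences by \cref{spsc} and \cref{spaces}, so the reduction function is literally an element of \(M_1\) --- and some such ingredient must be identified here. In short: your first two paragraphs constitute a correct and useful reduction of the problem, but the third paragraph is not a routine step awaiting bookkeeping; it is a wrong turn at the one point where the real argument has to occur.
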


\begin{cor}
Suppose \(\lambda\) is a cardinal and \(\mathcal U_0\) and \(\mathcal U_1\) are normal fine ultrafilters on \(P(\lambda)\) such that \(\mathcal U_0 \mo^* \mathcal U_1\). Then for some \(X\in \mathcal U_0\), \(\mathcal U_0 \restriction X\mo \mathcal U_1\).
\begin{proof}
Note that \(P(\lambda)\in \text{Ult}(V,\mathcal U_1)\) and apply \cref{strnn}.
\end{proof}
\end{cor}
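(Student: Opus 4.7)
The plan is to invoke \cref{strnn} with $X$ taken to be the entire underlying space $P(\lambda)$ of $\mathcal U_0$. Since the hypothesis $\mathcal U_0 \mo^* \mathcal U_1$ is given, the only remaining hypothesis of \cref{strnn} that requires work is the assertion that some $X \in \mathcal U_0$ lies in $\text{Ult}(V, \mathcal U_1)$. Choosing $X = P(\lambda)$ makes membership in $\mathcal U_0$ automatic (it is the entire space on which $\mathcal U_0$ lives), so the whole task reduces to verifying that $P(\lambda) \in \text{Ult}(V, \mathcal U_1)$.

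For this verification I would use the standard fact that a normal fine ultrafilter on $P(\lambda)$ gives rise to a $\lambda$-supercompact embedding: since $j_{\mathcal U_1}[\lambda] \in \text{Ult}(V, \mathcal U_1)$, the ultrapower is closed under $\lambda$-sequences. Consequently every subset of $\lambda$ (viewed as a $\lambda$-sequence of bits) lies in $\text{Ult}(V, \mathcal U_1)$, so $P(\lambda)^V \subseteq P(\lambda)^{\text{Ult}(V,\mathcal U_1)}$; the reverse inclusion is immediate, and since $P(\lambda)^{\text{Ult}(V,\mathcal U_1)}$ is defined inside the ultrapower from the parameter $\lambda$, we conclude $P(\lambda) \in \text{Ult}(V, \mathcal U_1)$.

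With $X = P(\lambda) \in \mathcal U_0 \cap \text{Ult}(V, \mathcal U_1)$ in hand, \cref{strnn} directly yields some $Y \in \mathcal U_0$ with $\mathcal U_0 \restriction Y \mo \mathcal U_1$, which is the desired conclusion. There is essentially no obstacle here: the substantive work has already been carried out in \cref{strnn}, and the present corollary is precisely the special case in which the normal fineness of $\mathcal U_1$ forces the space $P(\lambda)$ itself into the ultrapower, so no nontrivial choice of $X$ is needed.
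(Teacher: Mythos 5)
Your proof is correct and follows exactly the paper's route: the paper's entire argument is ``note that \(P(\lambda)\in \text{Ult}(V,\mathcal U_1)\) and apply \cref{strnn},'' and you have simply spelled out the closure-under-\(\lambda\)-sequences justification that the paper leaves implicit. No further comment is needed.
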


\begin{thm}[Ultrapower Axiom]\label{invariant}
Suppose \(2^{<\lambda} = \lambda\). Then the invariant Mitchell order wellorders the set of normal fine ultrafilters on \(P(\lambda)\).
\begin{proof}
Suppose \(\mathcal U_0\) and \(\mathcal U_1\) are normal fine ultrafilters on \(P(\lambda)\). Let \(U_0 = U_{\mathcal U_0}\) and \(U_1 = U_{\mathcal U_1}\). Since \(U_0\) and \(U_1\) are Dodd solid, either \(U_0 \mo U_1\), \(U_1\mo U_0\), or \(U_0 = U_1\). If \(U_0 = U_1\), it is easy to see that \(\mathcal U_0\) and \(\mathcal U_1\) are equal: normal fine ultrafilters on \(P(\lambda)\) are canonically determined by their ultrapower embeddings. Otherwise assuming without loss of generality that \(U_0\mo U_1\), it is clear that \(U_0 \mo \mathcal U_1\), and so by definition \(\mathcal U_0\mo^* \mathcal U_1\).
\end{proof}
\end{thm}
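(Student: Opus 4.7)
The plan is to reduce \cref{invariant} to \cref{dodd} using the correspondence $\mathcal U \mapsto U_\mathcal U$ established in the previous section. Given normal fine ultrafilters $\mathcal U_0, \mathcal U_1$ on $P(\lambda)$, set $U_i = U_{\mathcal U_i}$. The hypothesis $\lambda = 2^{<\lambda}$ is exactly what is needed to apply \cref{normalsolid}, so both $U_0$ and $U_1$ are Dodd solid ultrafilters on an ordinal. Invoking \cref{dodd} yields the trichotomy: $U_0 \mo U_1$, $U_1 \mo U_0$, or $U_0 = U_1$. Since the invariant Mitchell order is already known to be a strict wellfounded partial order (by the lemma following \cref{invariantdef}), proving this trichotomy on the level of normal fine ultrafilters will upgrade wellfoundedness to a wellorder.

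In the equality case $U_0 = U_1$, the goal is to conclude $\mathcal U_0 = \mathcal U_1$. The key observation is that $\mathcal U_i \equiv_{\textnormal{RK}} U_i$, so $\textnormal{Ult}(V,\mathcal U_0)$ and $\textnormal{Ult}(V,\mathcal U_1)$ have the same ultrapower embedding, namely the one induced by $U_0 = U_1$. A normal fine ultrafilter on $P(\lambda)$ is canonically recoverable from its ultrapower embedding $j$ via $\mathcal U = \{X \subseteq P(\lambda) : j[\lambda] \in j(X)\}$, so equal embeddings force $\mathcal U_0 = \mathcal U_1$.

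In the strict case, say $U_0 \mo U_1$, the goal is to produce an $\mathcal U_0' \equiv_{\textnormal{RK}} \mathcal U_0$ with $\mathcal U_0' \mo \mathcal U_1$, so that by definition $\mathcal U_0 \mo^* \mathcal U_1$. Here $U_0$ itself is the witness: $U_0 \equiv_{\textnormal{RK}} \mathcal U_0$ by the Rudin-Keisler equivalence established after \cref{optimal}, and $U_0 \in \textnormal{Ult}(V,U_1) = \textnormal{Ult}(V,\mathcal U_1)$ (the latter equality holding because $U_1$ and $\mathcal U_1$ are Rudin-Keisler equivalent and hence have identical ultrapowers). So $U_0 \mo \mathcal U_1$, which immediately gives $\mathcal U_0 \mo^* \mathcal U_1$.

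The main conceptual obstacle has already been overcome in the earlier sections: extending Solovay's lemma to the singular case (\cref{optimal}) to produce $U_\mathcal U$ as a uniform ultrafilter on an ordinal, and establishing its Dodd solidity under $\lambda = 2^{<\lambda}$ (\cref{normalsolid}). Once those tools are in hand, \cref{invariant} is essentially a bookkeeping exercise that transfers the trichotomy from the Dodd solid projections back to the original normal fine ultrafilters, with the invariant version of the Mitchell order absorbing precisely the Rudin-Keisler ambiguity introduced by the projection $\mathcal U \mapsto U_\mathcal U$.
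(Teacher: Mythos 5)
Your proposal is correct and follows essentially the same route as the paper: project to the Dodd solid ultrafilters $U_{\mathcal U_i}$ via \cref{normalsolid}, apply \cref{dodd} for the trichotomy, recover $\mathcal U_0 = \mathcal U_1$ from equality of embeddings, and in the strict case use $U_0$ itself as the Rudin--Keisler representative witnessing $\mathcal U_0 <^*_M \mathcal U_1$. The only difference is that you spell out a few steps (the recovery of $\mathcal U$ from $j[\lambda]$, and the identity $\textnormal{Ult}(V,U_1) = \textnormal{Ult}(V,\mathcal U_1)$) that the paper leaves implicit.
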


\bibliography{seedorder}{}

\begin{thebibliography}{1}

\bibitem{Kanamori}
Robert~M. Solovay, William~N. Reinhardt, and Akihiro Kanamori.
\newblock Strong axioms of infinity and elementary embeddings.
\newblock {\em Ann. Math. Logic}, 13(1):73--116, 1978.

\bibitem{Woodin}
W.~Hugh Woodin.
\newblock In search of ultimate-{$L$} the 19th midrasha mathematicae lectures.
\newblock {\em Bull. Symb. Log.}, 23(1):1--109, 2017.

\bibitem{NeemanSteel}
Itay Neeman and John Steel.
\newblock Equiconsistencies at subcompact cardinals.
\newblock {\em Arch. Math. Logic}, 55(1-2):207--238, 2016.

\end{thebibliography}
\bibliographystyle{unsrt}

\end{document}